\newtheorem{definition}{Definition}[section]
\newtheorem{proposition}[definition]{Proposition}
\newtheorem{corollary}[definition]{Corollary}
\newtheorem{remark}[definition]{Remark}
\newtheorem{theorem}[definition]{Theorem}
\newtheorem{example}[definition]{Example}
\def\rawo\lonra{\longrightarrow}
\def\ot{\otimes}
\newenvironment{proof}{{\it Proof.}}{\hfill $ \square $ \vskip 4mm}
\begin{document}
\title{Some (Hopf) algebraic properties of circulant matrices}
\author{Helena Albuquerque\thanks{The first author was partially supported
by the Centre for Mathematics of the University of Coimbra (CMUC)} \\
Departamento de Matem\'{a}tica, Universidade de Coimbra, \\
3001-454 Coimbra, Portugal\\
e-mail: lena@mat.uc.pt
\and Florin Panaite\thanks {The second author was partially supported by the
CNCSIS project ``Hopf algebras, cyclic homology and monoidal categories'',
contract nr. 560/2009, CNCSIS code $ID_{-}69$.}\\
Institute of Mathematics of the
Romanian Academy\\
PO-Box 1-764, RO-014700 Bucharest, Romania\\
e-mail: Florin.Panaite@imar.ro}
\date{}
\maketitle

\begin{abstract}
We study some (Hopf) algebraic properties of circulant matrices, inspired by
the fact that the algebra of circulant $n\times n$ matrices is isomorphic to the
group algebra of the cyclic group with $n$ elements. We introduce also a
class of matrices that generalize both circulant and skew circulant matrices, and
for which the eigenvalues and eigenvectors can be read directly from their entries.
\end{abstract}

\section*{Introduction}
${\;\;\;\;}$The starting point of this paper was a result in \cite{ak}, that
arose during the study of certain multiplicatively closed lattices and
so called Brandt algebras in (twisted) group rings of cyclic groups:
it asserts that for a twisted group ring ${\mathbb R}_F{\mathbb Z}_3$ (where $F$ is an
arbitrary map, not necessarily a two-cocycle) there exist three forms $q_1, q_2, q_3$,
concretely determined, such that any element $x\in {\mathbb R}_F{\mathbb Z}_3$
satisfies the polynomial equation $x^3-q_1(x)x^2+q_2(x)x-q_3(x)=0$. Moreover, $q_3(x)$ is
given by the determinant of a $3\times 3$ matrix (which, if $F$ is trivial, is a
circulant matrix) and, if $F$ is a two-cocycle, then $q_1$ and $q_2$ are related
in a certain (specific) way.

We wanted to see to what extent this kind of results may be generalized from $n=3$
to arbitrary $n$. It turns out that, even if analogous forms $q_1, ..., q_n$
may exist, not much could be said about them. Thus, we restricted our study
to the case when the map $F$ is trivial (that is, to ordinary group rings) and,
slightly more general, to the case when $F$ is a two-cocycle that is trivial in the
second cohomology group. We were thus led to consider circulant matrices as well as a
certain class of generalized circulants. By using circulants, we were able to prove that forms
$q_1, ..., q_n$ exist on the group ring ${\mathbb C}{\mathbb Z}_n$ and have some properties that
generalize the case $n=3$. This is done by using the well known algebra isomorphism
between the algebra of $n\times n$ circulants and the group algebra of the
cyclic group ${\mathbb Z}_n$. We found useful to give this result a Hopf
algebraic interpretation, obtaining along the way a result stating that the
algebra of $n\times n$ matrices "factorizes" (in a certain sense) as the
"product" between the algebra of $n\times n$ circulants and the algebra of $n\times n$ diagonal matrices.
Since the group ring is a Hopf algebra, the algebra of $n\times n$ circulants becomes also a Hopf algebra.
We have written down its Hopf structure and we found that the antipode looks particularly nice:
the antipode applied to a circulant matrix is simply the transpose of it. The
comultiplication $\Delta $ does not look too good, but we could
prove however that if $C$ is a circulant $n\times n$ matrix then $\Delta (C)$,
regarded as an $n^2\times n^2$ matrix, is block circulant with circulant blocks.
We present also a certain result (and a concrete example) concerning some lattices
in the algebra of circulant matrices.

In the last section of the paper we introduce a class of generalized circulants,
as follows. Denote the elements of ${\mathbb Z}_n$ by $e_1=1, e_2, ..., e_n$; for a
given map $\mu : {\mathbb Z}_n\rightarrow {\mathbb C}^*$, with $\mu (e_1)=1$, denote
$\mu (e_i)=\mu _i$ for all $i\in \{2, ..., n\}$.
For $c_1, ..., c_n\in {\mathbb C}$, denote by $circ (c_1, ..., c_n; \mu _2, ..., \mu _n)$
the $n\times n$ matrix with $c_1, c_2, ..., c_n$ in the first row,
$c_1$ on the main diagonal and entry $c_{j-i+1}\frac{\mu _i\mu _{j-i+1}}
{\mu _j}$ in any other position $(i, j)$ (we put
$\mu _1=1$). Certainly, $circ (c_1, ..., c_n)=
circ (c_1, ..., c_n; 1, ..., 1)$, so the matrices of the type
$circ (c_1, ..., c_n; \mu _2, ..., \mu _n)$
generalize circulant matrices.
They form an algebra, denoted by $C^n_{{\mathbb C}}(\mu )$, which is
isomorphic to the twisted group algebra with two-cocycle induced by $\mu $ (this
is actually how we arrived at these matrices). From general results we know that this
twisted group ring is isomorphic to the ordinary group ring, so $C^n_{{\mathbb C}}(\mu )$
is isomorphic to the algebra of $n\times n$ circulants. But these matrices generalize
not only circulants, but also skew circulant matrices: indeed,
a skew circulant matrix $scirc (c_1, ..., c_n)$ turns out to be
$circ (c_1, ..., c_n; \sigma, \sigma ^2, ..., \sigma ^{n-1})$, where
$\sigma =cos (\frac{\pi }{n})+isin (\frac{\pi }{n})$. Moreover,
for a matrix $circ (c_1, ..., c_n; \mu _2, ..., \mu _n)$ we are able to give an explicit formula for its
eigenvalues (and eigenvectors) that can be read directly from its entries;
this formula is a common generalization for the formulae (cf. \cite{davis}) giving the
eigenvalues for circulant and skew circulant matrices.
\section{Preliminaries}
\setcounter{equation}{0}
${\;\;\;\;}$We work over a base field ${\mathbb K}$ of characteristic zero (so all
matrices have entries in ${\mathbb K}$ and all algebras and Hopf
algebras are over ${\mathbb K}$). Sometimes we will need to choose
${\mathbb K}$ to be the field ${\mathbb C}$ of complex numbers, and we will specify
this every time we do it.

When we work with $n\times n$ circulant matrices or with elements in the
cyclic group of order $n$, all subscripts will be considered mod n. For a
matrix $A$ we denote by $A^T$ its transpose.

A circulant matrix of order n is an $n\times n$ matrix with the property
that each row is obtained from the previous row by rotating once to the right.
Obviously such a matrix is completely determined by its first row. If the first row is
$(c_1, c_2, ..., c_n)$ we will denote the associated circulant matrix by
$circ (c_1, c_2, ..., c_n)$. Note that the $(i, j)$ entry of this matrix is
$c_{j-i+1}$. For instance, for $n=3$, the matrix $circ(c_1, c_2, c_3)$ is

\[ \left( \begin{array}{ccc}
c_1 & c_2 & c_3\\
c_3 & c_1 & c_2\\
c_2 & c_3 & c_1
\end{array} \right)\]

If we denote by $C_{\mathbb K}^n$ the set of circulant $n\times n$ matrices, then
$C_{\mathbb K}^n$ is an $n$ dimensional subalgebra of the algebra
$M_n({\mathbb K})$ of $n\times n$ matrices over ${\mathbb K}$.

If ${\mathbb K}={\mathbb C}$, the field of complex numbers, and
$C=circ (c_1, c_2, ..., c_n)$ is a complex circulant matrix, then the eigenvalues and
eigenvectors of $C$ may be written down explicitely (cf. \cite{davis}): if we denote by
$p_C(X)=c_1+c_2X+...+c_nX^{n-1}$ and
$\omega =cos (\frac{2\pi }{n})+i sin (\frac{2\pi }{n})$, then the eigenvalues of $C$
are the scalars $\lambda _j=p_C(\omega ^{j-1})$, for $1\leq j\leq n$, the eigenvector
of $\lambda _j$ being the vector $x_j=(1, \omega ^{j-1}, \omega ^{2(j-1)}, ..., \omega ^{(n-1)(j-1)})^T$.

For more properties of circulant matrices we refer to \cite{davis}, while for
terminology, notation etc. concerning Hopf algebras we refer to
\cite{mont}.
\section{Forms associated to circulant matrices}
\setcounter{equation}{0}
${\;\;\;\;}$
We consider the group ${\mathbb Z}_n$ (cyclic group of order $n$) and we denote
its elements by $e_i=\widehat{i-1}$, for $i\in \{1, 2, ..., n\}$, so we have
$e_ie_j=e_{i+j-1}$ and $e_i^{-1}=e_{n-i+2}$, for all
$i, j\in \{1, 2, ..., n\}$ (subscripts mod n, according to our convention).

We consider the group algebra
${\mathbb K}{\mathbb Z}_n$ over the base field ${\mathbb K}$. The following result is well known:
\begin{theorem}\label{isom}
The group algebra ${\mathbb K}{\mathbb Z}_n$ and the algebra $C_{\mathbb K}^n$
of circulant $n\times n$ matrices over ${\mathbb K}$ are isomorphic, an explicit isomorphism being defined by
$g:{\mathbb K}{\mathbb Z}_n\rightarrow C_{\mathbb K}^n$, $g(c_1e_1+...+c_ne_n)=
circ (c_1, c_2, ..., c_n)$, for all $c_1, ..., c_n\in {\mathbb K}$.
\end{theorem}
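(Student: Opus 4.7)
The plan is to verify directly that the linear map $g$ is a well-defined algebra isomorphism. Both $\mathbb{K}\mathbb{Z}_n$ and $C_\mathbb{K}^n$ are $n$-dimensional $\mathbb{K}$-vector spaces, and $g$ is $\mathbb{K}$-linear by construction; to get bijectivity I would check that $g$ sends the basis $\{e_1,\dots,e_n\}$ to a linearly independent family. Writing $C_i:=\mathrm{circ}(\delta_{i,1},\delta_{i,2},\dots,\delta_{i,n})$, linear independence is transparent because the nonzero entries of distinct $C_i$ occupy disjoint positions in the matrix. Hence $g$ is already a linear isomorphism, and only the multiplicative structure is left to check.

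The heart of the proof is multiplicativity. Since $g$ is linear and $\{e_i\}$ spans $\mathbb{K}\mathbb{Z}_n$, it suffices to verify $g(e_i)g(e_j)=g(e_ie_j)=g(e_{i+j-1})=C_{i+j-1}$ for all $i,j$. The key observation is that the $(a,b)$ entry of $C_i$ is $\delta_{i,\,b-a+1}$ (with subscripts taken mod $n$), so $C_i$ is the permutation matrix having a single $1$ in column $a+i-1$ of row $a$. A direct computation then yields
\[
(C_iC_j)_{a,b} \;=\; \sum_{k=1}^{n}\delta_{i,\,k-a+1}\,\delta_{j,\,b-k+1} \;=\; \delta_{i+j-1,\,b-a+1} \;=\; (C_{i+j-1})_{a,b},
\]
since the first Kronecker delta forces $k=a+i-1$ and the second then collapses to the stated identity. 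To finish, I would note that $g(e_1)=C_1=\mathrm{circ}(1,0,\dots,0)$ is the identity matrix, so units are preserved, and $g$ is an algebra isomorphism.

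There is no substantive obstacle here; the only delicate point is matching the group law $e_ie_j=e_{i+j-1}$ with the shift convention $(i,j)\mapsto c_{j-i+1}$ that defines a circulant. Once the entry formula $(C_i)_{a,b}=\delta_{i,\,b-a+1}$ is in hand, multiplicativity reduces to the one-line bookkeeping with Kronecker deltas mod $n$ above. A slicker alternative would be to remark that $C_2$ is the standard cyclic shift, that $C_2^{i-1}=C_i$ and $C_2^n=I_n$, and then to invoke the universal property of the group algebra $\mathbb{K}\mathbb{Z}_n$ to extend the assignment $e_2\mapsto C_2$ to a unital algebra map, whose bijectivity follows from the dimension count.
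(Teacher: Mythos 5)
Your proof is correct and complete. Note that the paper itself offers no proof of this theorem: it is stated as ``well known'' and used as a black box, so there is nothing to compare against line by line. Your direct verification is the natural one, and the delta computation
$(C_iC_j)_{a,b}=\sum_k\delta_{i,\,k-a+1}\delta_{j,\,b-k+1}=\delta_{i+j-1,\,b-a+1}$
correctly reconciles the group law $e_ie_j=e_{i+j-1}$ with the circulant entry convention $(i,j)\mapsto c_{j-i+1}$, which is indeed the only delicate point. It is worth observing that the paper does, in effect, reprove this fact later: in Section 3 the map $\lambda_{{\mathbb K}{\mathbb Z}_n}$ (the left regular action of ${\mathbb K}{\mathbb Z}_n$ on its dual, identified with $M_n({\mathbb K})$ via the dual basis) is computed to send $e_k$ to $circ(0,\dots,1,\dots,0)$ and is then observed to coincide with $g$; since $\lambda_{{\mathbb K}{\mathbb Z}_n}$ is an algebra embedding by general Hopf-algebraic facts, multiplicativity of $g$ comes for free there. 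That argument is essentially your ``slicker alternative'' (extend $e_2\mapsto P_n$ using $P_n^n=I_n$ and the universal property, or equivalently use the regular representation), trading the explicit index bookkeeping for an appeal to general theory; your hands-on version has the advantage of being entirely self-contained.
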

\begin{corollary} \label{defforms}
Every element $x=c_1e_1+...+c_ne_n\in {\mathbb K}{\mathbb Z}_n$ is a
solution to a certain polynomial $X^n-q_1(x)X^{n-1}+...+(-1)^nq_n(x)$.
Here $q_1$ is a linear form called the trace of $x$ and $q_n$ is an $n$-form
called the norm of $x$. Moreover, $q_1(x)=tr[circ (c_1, ..., c_n)]=nc_1$
and $q_n(x)=det [circ (c_1, ..., c_n)]$.
\end{corollary}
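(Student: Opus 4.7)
The plan is to transport the statement to the matrix algebra via the isomorphism $g$ of \thref{isom} and then invoke the Cayley--Hamilton theorem. Set $C=g(x)=circ(c_1,\ldots,c_n)\in M_n({\mathbb K})$ and consider its characteristic polynomial
\[p_C(X)=\det(XI_n-C)=X^n-q_1(x)X^{n-1}+q_2(x)X^{n-2}-\cdots+(-1)^nq_n(x).\]
The coefficient $(-1)^k q_k(x)$ of $X^{n-k}$ is (up to sign) the sum of the $k\times k$ principal minors of $C$. Since every entry of $C$ is one of $c_1,\ldots,c_n$, each such minor is a homogeneous polynomial of degree $k$ in the coefficients $c_1,\ldots,c_n$; hence $q_k$ is a well-defined $k$-form on ${\mathbb K}{\mathbb Z}_n$.

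By Cayley--Hamilton one has $p_C(C)=0$ in $M_n({\mathbb K})$, and since $g$ is a ${\mathbb K}$-algebra isomorphism this identity pulls back to
\[x^n-q_1(x)x^{n-1}+\cdots+(-1)^nq_n(x)=0\]
in ${\mathbb K}{\mathbb Z}_n$, which is precisely the asserted polynomial equation.

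It only remains to identify $q_1$ and $q_n$ explicitly. By construction $q_1(x)=\mathrm{tr}(C)$; since the $(i,i)$-entry of $circ(c_1,\ldots,c_n)$ is $c_{i-i+1}=c_1$ for every $i$, we obtain $q_1(x)=nc_1$, which is evidently a linear form. Similarly, evaluating at $X=0$ gives $(-1)^n q_n(x)=p_C(0)=\det(-C)=(-1)^n\det(C)$, whence $q_n(x)=\det[circ(c_1,\ldots,c_n)]$, a form of degree $n$ in $c_1,\ldots,c_n$. There is no genuine obstacle: once \thref{isom} is available, the corollary is essentially a direct application of Cayley--Hamilton, and the only point requiring attention is the verification that each $q_k$ is really a form on the group algebra, which follows from the fact that the entries of $C$ depend linearly on the $c_i$'s.
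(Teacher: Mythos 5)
Your proposal is correct and follows the same route as the paper, which proves the corollary in one line by observing that the polynomial is the characteristic polynomial of $g(x)=circ(c_1,\ldots,c_n)$ and appealing to Cayley--Hamilton through the isomorphism of Theorem \ref{isom}. Your additional remarks (that $q_k$ is a $k$-form because the coefficient of $X^{n-k}$ is a signed sum of $k\times k$ principal minors, and the explicit identification of $q_1$ with the trace and $q_n$ with the determinant) simply spell out details the paper leaves implicit.
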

\begin{proof}
The polynomial is exactly the characteristic polynomial of the circulant matrix
corresponding to $x$ via the isomorphism $g:{\mathbb K}{\mathbb Z}_n\simeq C_{\mathbb K}^n$.
\end{proof}
\begin{remark} \label{transf}
Via the isomorphism ${\mathbb K}{\mathbb Z}_n\simeq C_{\mathbb K}^n$, we can also
consider the forms $q_1, ..., q_n$ as being defined on $C_{\mathbb K}^n$.
\end{remark}
\begin{definition}
For $x\in {\mathbb K}{\mathbb Z}_n$, we define the {\em conjugate} of $x$ by the formula
\begin{eqnarray}
&&\overline{x}=(-1)^{n+1}x^{n-1}+(-1)^nq_1(x)x^{n-2}+...+q_{n-1}(x)\in
{\mathbb K}{\mathbb Z}_n. \label{conjugate}
\end{eqnarray}
\end{definition}

As a consequence of Theorem \ref{isom} and Corollary \ref{defforms}, we
immediately obtain:
\begin{corollary}
An element $x\in {\mathbb K}{\mathbb Z}_n$ is invertible if and only if
$q_n(x)\neq 0$, and in this case its inverse is defined by the formula
$x^{-1}=\frac{\overline{x}}{q_n(x)}$.
\end{corollary}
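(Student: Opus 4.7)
The plan is to exploit the polynomial equation from Corollary~\ref{defforms} directly. Since $x$ is a root of $X^n - q_1(x)X^{n-1} + q_2(x)X^{n-2} - \cdots + (-1)^n q_n(x)$, we get the identity
\begin{eqnarray*}
x^n - q_1(x)x^{n-1} + q_2(x)x^{n-2} - \cdots + (-1)^{n-1}q_{n-1}(x)x = -(-1)^n q_n(x).
\end{eqnarray*}
The left-hand side factors as $x$ times the expression
\begin{eqnarray*}
(-1)^{n+1}x^{n-1} + (-1)^n q_1(x)x^{n-2} + \cdots + q_{n-1}(x),
\end{eqnarray*}
which is precisely $\overline{x}$ (after checking the signs carefully, using that multiplying the displayed left-hand side by $(-1)^{n+1}$ reverses every sign so the coefficient of $x^{n-1-k}$ on the $\overline{x}$ side becomes $(-1)^{n+1-k}q_k(x)$). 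Thus $x\cdot \overline{x} = q_n(x)$ in $\mathbb{K}\mathbb{Z}_n$. The main bookkeeping worry here is keeping the alternating signs straight; I would verify the identity once for general $n$ and also sanity-check it for small $n$ (say $n=3$) to be sure no sign has flipped.

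For the equivalence, the ``if'' direction is immediate from $x\overline{x}=q_n(x)$: if $q_n(x)\neq 0$, then $\overline{x}/q_n(x)$ is a two-sided inverse of $x$ (two-sided comes for free because $\mathbb{K}\mathbb{Z}_n$ is commutative). For the ``only if'' direction, transport $x$ through the isomorphism $g:\mathbb{K}\mathbb{Z}_n\simeq C^n_{\mathbb K}$ of Theorem~\ref{isom}: invertibility of $x$ is equivalent to invertibility of the circulant matrix $circ(c_1,\ldots,c_n)$, which in turn is equivalent to $\det[circ(c_1,\ldots,c_n)] \neq 0$. By Corollary~\ref{defforms}, this determinant is exactly $q_n(x)$.

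Note that one has to be a little careful about what ``invertible in $\mathbb{K}\mathbb{Z}_n$'' means versus ``invertible in $M_n(\mathbb{K})$''; however, since $C^n_{\mathbb K}$ is a subalgebra of $M_n(\mathbb{K})$ and the inverse of a circulant (when it exists in $M_n(\mathbb{K})$) is again circulant (because $\overline{x}/q_n(x)$ gives an explicit circulant inverse whenever $q_n(x)\neq 0$), the two notions coincide. No obstacles beyond the sign calculation are anticipated; the whole argument is essentially the Cayley-Hamilton trick for producing inverses, transferred via $g$ from the matrix side to the group algebra side.
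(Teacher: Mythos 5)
Your proof is correct and follows exactly the route the paper intends: the paper states this corollary as an immediate consequence of Theorem~\ref{isom} and Corollary~\ref{defforms} without spelling out details, and your argument (Cayley--Hamilton giving $x\overline{x}=q_n(x)$, plus $q_n(x)=\det[circ(c_1,\ldots,c_n)]$ for the converse) is precisely the intended verification, with the signs checked correctly.
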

\begin{corollary}
An element $x=c_1e_1+...+c_ne_n\in {\mathbb C}{\mathbb Z}_n$ is
noninvertible if and only if there exists $y$ a root of unity of order $n$
such that $c_1+c_2y+...+c_ny^{n-1}=0$.
\end{corollary}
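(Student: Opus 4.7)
The strategy is to combine the preceding corollary with the eigenvalue description of complex circulants recalled in the Preliminaries. First, by the previous corollary applied over $\mathbb{K}=\mathbb{C}$, the element $x=c_1e_1+\cdots+c_ne_n$ is noninvertible in $\mathbb{C}\mathbb{Z}_n$ if and only if $q_n(x)=0$, and by \coref{defforms} this is the condition $\det[circ(c_1,\ldots,c_n)]=0$.

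Next, I invoke the explicit eigenvalue formula recalled in the Preliminaries: for $\omega=\cos(\tfrac{2\pi}{n})+i\sin(\tfrac{2\pi}{n})$ and $p_C(X)=c_1+c_2X+\cdots+c_nX^{n-1}$, the eigenvalues of $C=circ(c_1,\ldots,c_n)$ are exactly the $n$ scalars $\lambda_j=p_C(\omega^{j-1})$ for $1\leq j\leq n$. Since the determinant of a (diagonalisable, but in any case complex) matrix equals the product of its eigenvalues, we have
\begin{eqnarray*}
q_n(x)=\det[circ(c_1,\ldots,c_n)]=\prod_{j=1}^{n}p_C(\omega^{j-1}).
\end{eqnarray*}

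Therefore $q_n(x)=0$ if and only if some factor $p_C(\omega^{j-1})$ vanishes, i.e.\ if and only if there exists $j\in\{1,\ldots,n\}$ with $c_1+c_2\omega^{j-1}+\cdots+c_n\omega^{(n-1)(j-1)}=0$. As $j$ runs through $\{1,\ldots,n\}$, the powers $\omega^{j-1}$ run through all $n$-th roots of unity; setting $y=\omega^{j-1}$ yields precisely the stated condition, and conversely every $n$-th root of unity $y$ is of this form. This gives the equivalence.

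There is no genuine obstacle here: the proof is essentially a bookkeeping step that translates the algebraic criterion $q_n(x)\neq 0$ (given in the prior corollary) into the analytic criterion provided by Davis's eigenvalue formula, using the elementary fact that the determinant is the product of the eigenvalues.
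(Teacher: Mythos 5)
Your proof is correct and follows exactly the same route as the paper: reduce noninvertibility to $q_n(x)=\det[circ(c_1,\ldots,c_n)]=0$ via the preceding corollary, then use Davis's formula to write this determinant as $\prod_{j=1}^n p_C(\omega^{j-1})$. You spell out the bookkeeping (identifying the $\omega^{j-1}$ with the $n$-th roots of unity) slightly more explicitly than the paper does, but the argument is identical.
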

\begin{proof}
We know that $q_n(x)=det[circ (c_1, ..., c_n)]$, and since we are over ${\mathbb C}$ here
we know that $q_n(x)$ is the
product of the eigenvalues of $circ (c_1, ..., c_n)$, which are
given, as we mentioned before, by $\{p_C(\omega ^{j-1})\}$, for all $1\leq j\leq n$,
where $p_C(X)=c_1+c_2X+...+c_nX^{n-1}$ and
$\omega =cos (\frac{2\pi }{n})+i sin (\frac{2\pi }{n})$, so the result follows.
\end{proof}

We will present explicit formulae for the forms $q_i$ for $n=3$ and $n=4$.
The case $n=3$ was studied in detail in \cite{ak}, where the formulae for $q_1, q_2, q_3$
have been found. Namely, for $x=c_1e_1+c_2e_2+c_3e_3\in {\mathbb K}{\mathbb Z}_3$,
we have:
\begin{eqnarray*}
&&q_1(x)=3c_1, \;\;q_2(x)=3c_1^2-3c_2c_3, \;\;q_3(x)=c_1^3+c_2^3+
c_3^3-3c_1c_2c_3.
\end{eqnarray*}
Moreover, the following properties have been found in \cite{ak}:
\begin{proposition} \label{n3}
If $x, y\in {\mathbb K}{\mathbb Z}_3$, then $q_1(\overline{x})=q_2(x)$ and
$q_2(x+y)=q_2(x)+q_2(y)+q_1(x)q_1(y)-q_1(xy)$.
\end{proposition}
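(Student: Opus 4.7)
The plan is to verify both identities by direct computation, using the explicit formulas $q_1(x)=3c_1$ and $q_2(x)=3c_1^2-3c_2c_3$ stated just before the proposition, together with the defining formula of $\overline{x}$.

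For the first identity, I specialize the definition of $\overline{x}$ to $n=3$, obtaining
\[
\overline{x} = x^2 - q_1(x)\, x + q_2(x)\, e_1,
\]
where the last term is the scalar $q_2(x)$ times the identity element of ${\mathbb K}{\mathbb Z}_3$. Since $q_1$ is ${\mathbb K}$-linear and $q_1(e_1)=3$, applying $q_1$ gives
\[
q_1(\overline{x}) = q_1(x^2) - q_1(x)^2 + 3\, q_2(x).
\]
Next I compute $q_1(x^2)$ by expanding $x^2$ via $e_ie_j=e_{i+j-1}$ and reading off the coefficient of $e_1$, which is $c_1^2+2c_2c_3$; hence $q_1(x^2) = 3(c_1^2+2c_2c_3)$. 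Substituting and simplifying, the $c_1^2$ contributions cancel down to $3c_1^2$ and the $c_2c_3$ contributions collapse to $-3c_2c_3$, yielding exactly $3c_1^2-3c_2c_3 = q_2(x)$.

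For the second identity, I compute both sides in coordinates with $x=c_1e_1+c_2e_2+c_3e_3$ and $y=d_1e_1+d_2e_2+d_3e_3$. Expanding $q_2(x+y)=3(c_1+d_1)^2-3(c_2+d_2)(c_3+d_3)$ isolates $q_2(x)+q_2(y)$ plus cross terms $6c_1d_1-3c_2d_3-3c_3d_2$. On the right-hand side, $q_1(x)q_1(y)=9c_1d_1$, while the pairs $(i,j)\in\{1,2,3\}^2$ with $e_ie_j=e_1$ are precisely $(1,1)$, $(2,3)$, $(3,2)$, so the coefficient of $e_1$ in $xy$ equals $c_1d_1+c_2d_3+c_3d_2$, giving $q_1(xy)=3(c_1d_1+c_2d_3+c_3d_2)$. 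The combination $q_1(x)q_1(y)-q_1(xy)$ therefore equals $6c_1d_1-3c_2d_3-3c_3d_2$, matching the cross terms on the left.

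Neither identity presents a genuine obstacle; the proof is a routine verification. The only care needed is to record that $q_1$ assigns three times the coefficient of $e_1$ --- in particular $q_1(q_2(x)\,e_1)=3q_2(x)$, not $q_2(x)$ --- and to correctly enumerate the three pairs $(i,j)$ for which $e_ie_j=e_1$ when extracting the scalar part of $xy$. Slipping a factor of $3$ is the only realistic pitfall.
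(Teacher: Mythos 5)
Your computations are correct: the specialization of the conjugate formula to $n=3$, the value $q_1(q_2(x)e_1)=3q_2(x)$, the coefficient $c_1^2+2c_2c_3$ of $e_1$ in $x^2$, and the enumeration of the pairs $(1,1),(2,3),(3,2)$ with $e_ie_j=e_1$ all check out, and both identities follow. Note that the paper itself offers no proof of this proposition --- it simply quotes the result from the reference [ak] --- so there is nothing to match line by line; however, the paper does prove the general-$n$ analogues ($q_1(\overline{x})=q_{n-1}(x)$ and the $q_2$ addition formula) over ${\mathbb C}$ by a genuinely different route: it passes to the eigenvalues $\lambda_1,\dots,\lambda_n$ of the associated circulant, identifies $q_i(x)$ with the elementary symmetric polynomial $s_i(\lambda_1,\dots,\lambda_n)$ and $q_1(x^k)$ with the power sum $p_k$, and invokes Newton's identities. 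Your coordinate computation is more elementary and works verbatim over any base field ${\mathbb K}$ (no appeal to diagonalizability or to ${\mathbb C}$), but it is tied to $n=3$; the paper's eigenvalue argument buys the statement for all $n$ at the cost of working over ${\mathbb C}$.
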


Similarly, by performing explicit computations, one can show that for
$x=c_1e_1+c_2e_2+c_3e_3+c_4e_4\in {\mathbb K}{\mathbb Z}_4$, we have:
\begin{eqnarray*}
&&q_1(x)=4c_1, \\
&&q_2(x)=6c_1^2-4c_2c_4-2c_3^2, \\
&&q_3(x)=4c_1^3-8c_1c_2c_4-4c_1c_3^2+4c_2^2c_3+4c_3c_4^2, \\
&&q_4(x)=c_1^4-c_2^4+c_3^4-c_4^4-2c_1^2c_3^2-4c_1^2c_2c_4+
4c_1c_2^2c_3+4c_1c_3c_4^2+2c_2^2c_4^2-4c_2c_3^2c_4,
\end{eqnarray*}
and also by explicit computations using these formulae one can prove:
\begin{proposition} \label{n4}
If $x, y\in {\mathbb K}{\mathbb Z}_4$, then $q_1(\overline{x})=q_3(x)$ and
$q_2(x+y)=q_2(x)+q_2(y)+q_1(x)q_1(y)-q_1(xy)$.
\end{proposition}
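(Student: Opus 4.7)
The plan is to derive both identities from Newton's identities relating elementary symmetric polynomials to power sums, applied to the eigenvalues of the circulant matrix associated to $x$ via \thref{isom}. By \coref{defforms}, $q_k(x)$ equals the $k$-th elementary symmetric polynomial $e_k(\lambda_1,\ldots,\lambda_4)$ in these eigenvalues, while $q_1(x^k)$ equals the trace of the $k$-th power of that matrix, hence the power sum $p_k = \sum_i \lambda_i^k$. Recall also that $q_1$ is linear and that ${\mathbb K}{\mathbb Z}_4$ is commutative, which yields $q_1(xy) = q_1(yx)$.

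For the second identity, I would first establish the auxiliary relation
\[ 2 q_2(x) = q_1(x)^2 - q_1(x^2), \]
which is the Newton identity $p_1^2 - p_2 = 2 e_2$; it can also be checked from the explicit formulas by computing the $e_1$-coefficient of $x^2$, namely $c_1^2 + 2 c_2 c_4 + c_3^2$, which gives $q_1(x^2) = 4 c_1^2 + 8 c_2 c_4 + 4 c_3^2$ and then matches $q_1(x)^2 - q_1(x^2) = 12 c_1^2 - 8 c_2 c_4 - 4 c_3^2 = 2 q_2(x)$. Applying this auxiliary relation with $x+y$ in place of $x$, expanding $q_1(x+y)^2 = q_1(x)^2 + 2 q_1(x) q_1(y) + q_1(y)^2$ and $q_1((x+y)^2) = q_1(x^2) + 2 q_1(xy) + q_1(y^2)$ (the latter using commutativity), and dividing by two, one obtains $q_2(x+y) = q_2(x) + q_2(y) + q_1(x) q_1(y) - q_1(xy)$.

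For the first identity, I would apply $q_1$ directly to the defining formula $\overline{x} = -x^3 + q_1(x) x^2 - q_2(x) x + q_3(x) e_1$ (here the scalar $q_3(x)$ multiplies the identity element $e_1$ of ${\mathbb K}{\mathbb Z}_4$). Using linearity of $q_1$ and the fact that $q_1(e_1) = 4$, this gives
\[ q_1(\overline{x}) = - q_1(x^3) + q_1(x) q_1(x^2) - q_1(x) q_2(x) + 4 q_3(x). \]
It therefore suffices to prove $q_1(x^3) - q_1(x) q_1(x^2) + q_1(x) q_2(x) = 3 q_3(x)$, which is exactly the Newton identity $p_3 - e_1 p_2 + e_2 p_1 = 3 e_3$ specialized to the four eigenvalues; it can also be verified by expanding $x^3$ in the basis $\{e_1,\ldots,e_4\}$ and reading off the coefficient of $e_1$.

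The main obstacle is pure bookkeeping. If one avoids the Newton-identity shortcut and substitutes the four explicit polynomials for $q_1,\ldots,q_4$ directly into each identity, then both sides become polynomials of degree up to eight in $c_1,\ldots,c_4$, and matching them term by term (after computing $x^2$ and $x^3$ in $\{e_1,\ldots,e_4\}$) is lengthy but mechanical. The Newton-identity approach collapses the whole proof to the two compact scalar identities above, so the explicit formulas for $q_2$ and $q_3$ are needed only in this auxiliary verification step.
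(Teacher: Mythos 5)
Your proof is correct, but it takes a genuinely different route from the paper's. For Proposition~\ref{n4} the paper offers no conceptual argument at all: it simply states that the two identities follow ``by explicit computations using these formulae,'' i.e.\ by brute-force substitution of the displayed degree-$1$ through degree-$4$ polynomials in $c_1,\dots,c_4$. Your Newton-identity argument (the polarization of $2e_2=p_1^2-p_2$ for the second identity, and $3e_3=p_3-e_1p_2+e_2p_1$ combined with $q_1(e_1)=4$ for the first) is essentially the $n=4$ specialization of the strategy the paper itself deploys only two propositions later, when it proves $q_1(\overline{x})=q_{n-1}(x)$ and the $q_2$ polarization formula for general $n$ over ${\mathbb C}$; so you have anticipated the authors' own generalization and rendered the $n=4$ case a corollary of it rather than an independent computation. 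Two small points are worth making explicit. First, the proposition is stated over an arbitrary field ${\mathbb K}$ of characteristic zero, whereas the eigenvalue interpretation $q_k(x)=e_k(\lambda_1,\dots,\lambda_4)$ and $q_1(x^k)=p_k(\lambda_1,\dots,\lambda_4)$ requires passing to an algebraic closure of ${\mathbb K}$; this is harmless because both sides of each identity lie in ${\mathbb K}$ (the $q_i$ being coefficients of the characteristic polynomial), and in any case your fallback verification via the $e_1$-coefficients of $x^2$ and $x^3$ works entirely over ${\mathbb K}$. Second, your arithmetic checks out: the $e_1$-coefficient of $x^2$ is indeed $c_1^2+2c_2c_4+c_3^2$, giving $q_1(x)^2-q_1(x^2)=12c_1^2-8c_2c_4-4c_3^2=2q_2(x)$, and the analogous computation for $x^3$ yields $q_1(x^3)-q_1(x)q_1(x^2)+q_1(x)q_2(x)=3q_3(x)$, consistent with the paper's displayed formula for $q_3$. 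Your approach buys uniformity and brevity; the paper's buys nothing except that it requires no machinery beyond the displayed formulas.
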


Actually, the above results admit generalizations
to arbitrary $n$, at least over ${\mathbb C}$:
\begin{proposition}
If $x\in {\mathbb C}{\mathbb Z}_n$, then $q_1(\overline{x})=q_{n-1}(x)$.
\end{proposition}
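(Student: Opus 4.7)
The plan is to transfer the statement to the algebra of circulant matrices via the isomorphism of Theorem~\ref{isom} and exploit the Fourier diagonalization recalled in Section~1. Write $C=g(x)=circ(c_1,\ldots,c_n)$ and $\overline{C}=g(\overline{x})$. By Corollary~\ref{defforms} the characteristic polynomial of $C$ is $X^n-q_1(x)X^{n-1}+\cdots+(-1)^nq_n(x)$, so Cayley--Hamilton, applied inside the commutative algebra ${\mathbb C}{\mathbb Z}_n$, yields
\[
x\bigl(x^{n-1}-q_1(x)x^{n-2}+\cdots+(-1)^{n-1}q_{n-1}(x)\bigr)=(-1)^{n+1}q_n(x).
\]
Comparing with the definition (\ref{conjugate}) of $\overline{x}$, this is exactly the identity $x\overline{x}=q_n(x)$, equivalently $C\overline{C}=q_n(x)I$. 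In particular $\overline{C}$ is a polynomial $P(C)$ in $C$, hence is simultaneously diagonalized with $C$ in the Fourier basis.

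Assume first that $q_n(x)\neq 0$, so that all eigenvalues $\lambda_j=p_C(\omega^{j-1})$ of $C$ are nonzero. The $j$-th eigenvalue of $\overline{C}$ is $\mu_j=P(\lambda_j)$; dividing the scalar characteristic equation for $\lambda_j$ by $\lambda_j$ and multiplying by $(-1)^{n+1}$ gives $\mu_j=q_n(x)/\lambda_j$. Since the $q_k(x)$ are just the elementary symmetric functions $e_k$ of $\lambda_1,\ldots,\lambda_n$,
\[
q_1(\overline{x})=\mathrm{tr}(\overline{C})=\sum_{j=1}^n\mu_j=q_n(x)\sum_{j=1}^n\frac{1}{\lambda_j}=\Bigl(\prod_k\lambda_k\Bigr)\sum_j\frac{1}{\lambda_j}=e_{n-1}(\lambda_1,\ldots,\lambda_n)=q_{n-1}(x).
\]

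For arbitrary $x$, both sides are polynomials in $c_1,\ldots,c_n$: this is clear for $q_{n-1}(x)$, and for $q_1(\overline{x})$ it follows from (\ref{conjugate}), which expresses $\overline{x}$ as a polynomial in $x$ whose coefficients are themselves polynomials in the $c_i$. The two polynomials have just been shown to agree on the Zariski-dense open set $\{q_n\neq 0\}\subset {\mathbb C}^n$, so they coincide identically.

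The only real obstacle is handling the case in which some eigenvalue vanishes; this is dispatched by the density/polynomial-identity step above. The heart of the argument is simply the observation that $x\overline{x}=q_n(x)$ realises $\overline{x}$ as the classical adjoint of $x$ inside the commutative algebra ${\mathbb C}{\mathbb Z}_n$, so that the familiar trace--adjugate identity $\mathrm{tr}(\mathrm{adj})=e_{n-1}(\lambda_1,\ldots,\lambda_n)$ produces the claim automatically.
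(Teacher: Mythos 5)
Your proof is correct, but it takes a genuinely different route from the paper's. The paper works entirely at the level of symmetric functions of the eigenvalues: it records that $q_i(x)=s_i(\lambda_1,\ldots,\lambda_n)$ and $q_1(x^k)=p_k(\lambda_1,\ldots,\lambda_n)$, expands $q_1(\overline{x})$ term by term directly from (\ref{conjugate}), and recognizes the resulting sum as the case $k=n-1$ of Newton's identities $ks_k=\sum_{i=1}^k(-1)^{i-1}s_{k-i}p_i$; this yields the identity unconditionally for every $x$ in one computation. You instead identify $\overline{x}$ as the adjugate via Cayley--Hamilton (the relation $x\overline{x}=q_n(x)$, which you verify correctly, including the signs), read off the eigenvalues of $\overline{C}$ as $q_n(x)/\lambda_j$ when $q_n(x)\neq 0$, invoke the trace--adjugate identity $\sum_j\prod_{k\neq j}\lambda_k=e_{n-1}(\lambda_1,\ldots,\lambda_n)$, and then remove the invertibility hypothesis by a Zariski-density/polynomial-identity argument (legitimate, since both sides are polynomials in $c_1,\ldots,c_n$ and $q_n$ is not identically zero, e.g.\ $q_n(e_1)=1$). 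Your version is arguably more conceptual --- it explains \emph{why} the statement holds, namely that $\overline{x}$ is the adjugate and the trace of the adjugate is $e_{n-1}$ of the eigenvalues --- and it avoids citing Newton's identities, at the cost of an extra genericity-plus-density step; the paper's version needs no such step but uses Newton's identity as its key input. The two arguments are ultimately two faces of the same fact, since Newton's identity for $k=n-1$ is precisely the rearrangement of the trace-of-adjugate formula that your computation produces.
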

\begin{proof}
We denote by $s_0, s_1, s_2, ..., s_n$ the elementary symmetric polynomials in
$n$ variables $X_1, X_2, ..., X_n$, defined by $s_0=1$, $s_1=X_1+...+X_n$,
$s_2=\displaystyle\sum _{i<j}X_iX_j,..., s_n=X_1X_2...X_n$. Also, for any
$1\leq k\leq n$, we consider the polynomial $p_k$ defined by
$p_k=X_1^k+X_2^k+...+X_n^k$. These polynomials are related by
Newton's identities:
\begin{eqnarray*}
&&ks_k=\sum _{i=1}^k(-1)^{i-1}s_{k-i}p_i, \;\;\:\forall \;1\leq k\leq n.
\end{eqnarray*}
In particular, for $k=n-1$, we have
\begin{eqnarray}
&&(n-1)s_{n-1}=\sum _{i=1}^{n-1}(-1)^{i-1}s_{n-1-i}p_i. \label{Newton}
\end{eqnarray}
Let now $x=c_1e_1+...+c_ne_n\in {\mathbb C}{\mathbb Z}_n$ and consider the
circulant matrix $circ (c_1, ..., c_n)$, whose eigenvalues will be denoted
by $\lambda _1, ..., \lambda _n$. It is clear that, for any $1\leq i\leq n$, we have
$q_i(x)=s_i(\lambda _1, ..., \lambda _n)$. On the other hand, for any
$1\leq k\leq n$, the eigenvalues of the circulant matrix associated to $x^k$ are
$\lambda _1^k, ..., \lambda _n^k$ and so we have $q_1(x^k)=
\lambda _1^k+...+\lambda _n^k=p_k(\lambda _1, ..., \lambda _n)$.
Now we compute, directly from the formula (\ref{conjugate}):
\begin{eqnarray*}
q_1(\overline{x})&=&(-1)^{n+1}q_1(x^{n-1})+(-1)^nq_1(x)q_1(x^{n-2})
+...-q_{n-2}(x)q_1(x)+nq_{n-1}(x)\\
&=&(-1)^{n+1}p_{n-1}(\lambda _1, ..., \lambda _n)+
(-1)^ns_1(\lambda _1,..., \lambda _n)p_{n-2}(\lambda _1, ..., \lambda _n)+...+\\
&&
+s_{n-3}(\lambda _1, ..., \lambda _n)p_2(\lambda _1, ..., \lambda _n)
-s_{n-2}(\lambda _1, ..., \lambda _n)p_1(\lambda _1, ..., \lambda _n)
+ns_{n-1}(\lambda _1, ..., \lambda _n)\\
&\overset{(\ref{Newton})}{=}&s_{n-1}(\lambda _1, ..., \lambda _n)
=q_{n-1}(x),
\end{eqnarray*}
finishing the proof.
\end{proof}
\begin{proposition}
If $x, y\in {\mathbb C}{\mathbb Z}_n$, then
$q_2(x+y)=q_2(x)+q_2(y)+q_1(x)q_1(y)-q_1(xy)$.
\end{proposition}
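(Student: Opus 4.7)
The plan is to reduce the claim to the Newton identity for $k=2$, in the same spirit as the preceding proposition, and then to expand directly. Newton's identity for $k=2$ reads $2s_2=s_1^2-p_2$, i.e.\ $s_2=\frac{1}{2}(s_1^2-p_2)$, in the elementary symmetric polynomial $s_2$ and the power sum $p_2=X_1^2+\dots+X_n^2$. Applying this to the eigenvalues $\lambda_1,\dots,\lambda_n$ of the circulant matrix associated to $x\in {\mathbb C}{\mathbb Z}_n$ and using the two facts already established in the previous proof, namely $q_i(x)=s_i(\lambda_1,\dots,\lambda_n)$ and $q_1(x^k)=p_k(\lambda_1,\dots,\lambda_n)$, yields the key identity
\begin{eqnarray*}
&& q_2(x)=\frac{1}{2}\bigl(q_1(x)^2-q_1(x^2)\bigr).
\end{eqnarray*}

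Next I would apply this formula to $x+y$. Since the group ${\mathbb Z}_n$ is abelian, the algebra ${\mathbb C}{\mathbb Z}_n$ is commutative, so $xy=yx$ and consequently $(x+y)^2=x^2+2xy+y^2$. Using the linearity of $q_1$ (which is immediate from $q_1(\sum c_i e_i)=nc_1$), I would compute
\begin{eqnarray*}
q_2(x+y)&=&\frac{1}{2}\bigl((q_1(x)+q_1(y))^2-q_1(x^2)-2q_1(xy)-q_1(y^2)\bigr)\\
&=&\frac{1}{2}\bigl(q_1(x)^2-q_1(x^2)\bigr)+\frac{1}{2}\bigl(q_1(y)^2-q_1(y^2)\bigr)+q_1(x)q_1(y)-q_1(xy)\\
&=&q_2(x)+q_2(y)+q_1(x)q_1(y)-q_1(xy),
\end{eqnarray*}
which is precisely what we want.

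There is no real obstacle here; the only point to check is that $q_1$ is linear and that commutativity of ${\mathbb C}{\mathbb Z}_n$ allows the cross term $2q_1(xy)$ to appear. The proof is in fact characteristic-free in spirit once the two basic identities $q_i(x)=s_i(\lambda_\bullet)$ and $q_1(x^k)=p_k(\lambda_\bullet)$ are granted, so the same argument would work verbatim over any field in which the eigenvalues exist; the hypothesis ${\mathbb K}={\mathbb C}$ serves only to guarantee the diagonalizability needed to invoke these two identities.
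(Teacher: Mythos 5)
Your proof is correct, but it takes a genuinely different route from the paper's. The paper argues directly on the spectra: it lets $\lambda_i$ and $\mu_i$ be the eigenvalues of the circulants attached to $x$ and $y$, notes that $x+y$ and $xy$ have eigenvalues $\lambda_i+\mu_i$ and $\lambda_i\mu_i$, and then expands $q_2(x+y)=\sum_{i<j}(\lambda_i+\mu_i)(\lambda_j+\mu_j)$ and $q_1(x)q_1(y)-q_1(xy)$ by hand, observing that the cross terms $\sum_{i<j}\lambda_i\mu_j+\sum_{i<j}\mu_i\lambda_j$ match. You instead extract from the $k=2$ Newton identity the closed formula $q_2(x)=\tfrac12\bigl(q_1(x)^2-q_1(x^2)\bigr)$ and then finish purely formally, using only the linearity of $q_1$ and the commutativity of ${\mathbb C}{\mathbb Z}_n$ to expand $(x+y)^2$. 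Your version is more conceptual and mirrors the strategy of the preceding proposition (where the paper does invoke Newton's identities), making the two results visibly instances of one mechanism; the paper's version is more elementary, needing nothing beyond how eigenvalues behave under sums and products. One small caveat on your closing remark: the intermediate identity $q_2=\tfrac12(q_1^2-q_1(\cdot^2))$ divides by $2$, so the argument is not literally characteristic-free (characteristic $2$ would break it), though this is irrelevant over ${\mathbb C}$ and does not affect the correctness of the proof.
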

\begin{proof}
We denote by $\lambda _1, ..., \lambda _n$ (respectively $\mu _1, ..., \mu_n$)
the eigenvalues of the circulant matrix corresponding to $x$ (respectively y). Then
the eigenvalues of the circulant matrix corresponding to $x+y$
(respectively $xy$) are $\lambda _1+\mu _1, ..., \lambda _n+\mu _n$
(respectively $\lambda _1\mu _1, ..., \lambda _n\mu _n$). So, we have:
\begin{eqnarray*}
q_2(x+y)&=&\sum _{i<j}(\lambda _i+\mu _i)(\lambda _j+\mu _j)\\
&=&\sum _{i<j}\lambda _i\lambda _j+\sum _{i<j}\mu _i\mu _j
+\sum _{i<j}\lambda _i\mu _j+\sum _{i<j}\mu _i\lambda _j\\
&=&q_2(x)+q_2(y)+\sum _{i<j}\lambda _i\mu _j+\sum _{i<j}\mu _i\lambda _j,
\end{eqnarray*}
\begin{eqnarray*}
q_1(x)q_1(y)-q_1(xy)&=&(\sum _{i=1}^n\lambda _i)(\sum _{j=1}^n\mu _j)
-\sum _{k=1}^n\lambda _k\mu _k\\
&=&\sum _{i<j}\lambda _i\mu _j+\sum _{i<j}\mu _i\lambda _j,
\end{eqnarray*}
hence indeed we have $q_2(x+y)=q_2(x)+q_2(y)+q_1(x)q_1(y)-q_1(xy)$.
\end{proof}
\section{Hopf-algebraic properties of circulant matrices}
\setcounter{equation}{0}
${\;\;\;\;}$
Let $H$ be a finite dimensional Hopf algebra. We denote as usual by
$\rightharpoonup $ the left regular action of $H$ on $H^*$ defined by
$(h\rightharpoonup \varphi )(h')=\varphi (h'h)$, for all $h, h'\in H$ and
$\varphi \in H^*$. It is well known that with this action $H^*$ becomes a
left $H$-module algebra, so we can consider the smash product
$H^*\#H$ (which is sometimes called in the literature the
{\em Heisenberg double} of $H^*$). It is also
well known (see \cite{mont}, p. 162)
that $H^*\#H$ is isomorphic as an algebra to the endomorphism algebra
$End(H^*)$, an explicit algebra isomorphism being defined by
\begin{eqnarray*}
&&\lambda :H^*\#H\simeq End(H^*), \;\;\;
\lambda (\varphi \# h)(\psi )=\varphi *
(h\rightharpoonup \psi ),
\end{eqnarray*}
for all $\varphi, \psi \in H^*$ and $h\in H$, where $*$ is the
convolution product
in $H^*$ defined by $(\varphi *\psi )(h)=\varphi (h_1)\psi (h_2)$,
where we used
the Sweedler-type notation $\Delta (h)=h_1\otimes h_2$ for the
comultiplication of $H$. In particular, the restrictions of $\lambda $ to
$H^*$ and $H$ define embeddings of $H^*$ and $H$ into
$End(H^*)$ (as algebras), defined respectively by
\begin{eqnarray*}
&&\lambda _{H^*}:H^*\rightarrow End(H^*), \;\;\;
\lambda _{H^*}(\varphi )(\psi )=
\varphi *\psi , \\
&&\lambda _H:H\rightarrow End(H^*), \;\;\;\lambda _H(h)(\psi )=
h\rightharpoonup \psi .
\end{eqnarray*}

If we identify $End(H^*)$ with $M_n({\mathbb K})$, where $n=dim (H)$, 
we obtain algebra embeddings of
$H$ and $H^*$ into $M_n({\mathbb K})$.
Our aim is to see how these embeddings look like if we take
$H={\mathbb K}{\mathbb Z}_n$ with its usual Hopf algebra structure.
\begin{proposition}
The
image of $({\mathbb K}{\mathbb Z}_n)^*$ in $M_n({\mathbb K})$ via the above
embedding is the algebra $D_{{\mathbb K}}^n$ of diagonal $n\times n$ matrices.
The
image of ${\mathbb K}{\mathbb Z}_n$ in $M_n({\mathbb K})$ via the
above embedding is the algebra $C_{{\mathbb K}}^n$ of circulant
matrices and the embedding $\lambda _{{\mathbb K}{\mathbb Z}_n}$
coincides with the algebra isomorphism  $g:{\mathbb K}{\mathbb Z}_n
\simeq C_{{\mathbb K}}^n$ defined in Theorem \ref{isom}.
\end{proposition}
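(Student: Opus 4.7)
The plan is to identify $\mathrm{End}(H^*)$ with $M_n(\mathbb{K})$ by fixing the dual basis $\{e_1^*, \ldots, e_n^*\}$ of $H^*$ (where $H = \mathbb{K}\mathbb{Z}_n$), and then compute the matrices of $\lambda_{H^*}(e_i^*)$ and $\lambda_H(e_i)$ in this basis directly from the definitions.

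First I would handle the $H^*$ part. Since each $e_i$ is group-like, $\Delta(e_k) = e_k\otimes e_k$, so the convolution product in $H^*$ satisfies
\[
(e_i^*\ast e_j^*)(e_k) = e_i^*(e_k)e_j^*(e_k) = \delta_{ik}\delta_{jk},
\]
which gives $e_i^*\ast e_j^* = \delta_{ij}e_i^*$. Therefore $\lambda_{H^*}(e_i^*)$ acts on the dual basis by $e_j^*\mapsto \delta_{ij}e_j^*$, i.e.\ its matrix is the elementary diagonal $E_{ii}$. Summing over $i$ shows that the image of $\lambda_{H^*}$ is exactly $D_{\mathbb{K}}^n$.

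For the $H$ part I would compute $e_i\rightharpoonup e_j^*$ using the defining formula of $\rightharpoonup$ and the group multiplication $e_ke_i = e_{k+i-1}$:
\[
(e_i\rightharpoonup e_j^*)(e_k) = e_j^*(e_ke_i) = e_j^*(e_{k+i-1}) = \delta_{j,k+i-1},
\]
so $e_i\rightharpoonup e_j^* = e_{j-i+1}^*$. Hence in the dual basis the matrix of $\lambda_H(e_i)$ has entry $\delta_{k,j-i+1}$ in position $(k,j)$. For $x = c_1e_1+\cdots+c_ne_n$, linearity gives the $(k,j)$-entry of $\lambda_H(x)$ as
\[
\sum_{i=1}^n c_i\,\delta_{k,j-i+1} = c_{j-k+1},
\]
which is precisely the $(k,j)$-entry of $\mathrm{circ}(c_1,\ldots,c_n)$ as recalled in the Preliminaries. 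Thus $\lambda_{\mathbb{K}\mathbb{Z}_n}(x) = g(x)$, the image of $\lambda_H$ is $C_{\mathbb{K}}^n$, and the two algebra maps coincide.

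There is no real obstacle here beyond bookkeeping; the only thing to watch is matching the indexing convention for circulants (entry $c_{j-i+1}$ in position $(i,j)$) with the convention for representing a linear map in a basis (column $j$ records the image of the $j$-th basis vector). With the computation $e_i\rightharpoonup e_j^* = e_{j-i+1}^*$ in hand, the two conventions line up verbatim, which is what makes the identification with the map $g$ of \thref{isom} clean.
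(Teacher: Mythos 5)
Your proof is correct and follows essentially the same route as the paper's: identify $\mathrm{End}(H^*)$ with $M_n(\mathbb{K})$ via the dual basis, show the elements of the dual basis act as the diagonal matrix units under convolution, and compute $e_i\rightharpoonup e_j^*=e_{j-i+1}^*$ to recover the circulant pattern. The only cosmetic difference is that you derive the idempotent relation $e_i^*\ast e_j^*=\delta_{ij}e_i^*$ from group-likeness, whereas the paper simply quotes it as the known algebra structure of $(\mathbb{K}\mathbb{Z}_n)^*$.
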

\begin{proof}
We consider the basis $\{e_1, ..., e_n\}$ in ${\mathbb K}{\mathbb Z}_n$
as before and its
dual basis $\{p_1, ..., p_n\}$ in $({\mathbb K}{\mathbb Z}_n)^*$,
defined by $p_i(e_j)=\delta _{ij}$ for all $i, j\in \{1, ..., n\}$. Thus, the
algebra structure of $({\mathbb K}{\mathbb Z}_n)^*$ is defined on
this basis by
$p_ip_j=\delta _{ij}p_j$ and $p_1+...+p_n=1$. Moreover, we identify
$End(({\mathbb K}{\mathbb Z}_n)^*)\equiv M_n({\mathbb K})$ via
this basis, that is, if $f\in End(({\mathbb K}{\mathbb Z}_n)^*)$ and
$f(p_j)=\sum _{i=1}^na_{ij}p_i$, we identify $f$ with the matrix
$M_f=(a_{ij})_{1\leq i, j\leq n}$.

By using the formula for $\lambda _{H^*}$ given above, we can easily see that
$\lambda _{({\mathbb K}{\mathbb Z}_n)^*}(p_k)(p_j)=p_kp_j=\delta _{jk}p_k$,
for all
$1\leq j, k\leq n$, that is $\lambda _{({\mathbb K}{\mathbb Z}_n)^*}(p_k)$
coincides
via the identification $End(({\mathbb K}{\mathbb Z}_n)^*)\equiv
M_n({\mathbb K})$  with
the matrix having $1$ in the $(k, k)$ position and $0$ elsewhere.
Thus, for an arbitrary
element $x=d_1p_1+...+d_np_n\in ({\mathbb K}{\mathbb Z}_n)^*$, with
$d_1, ..., d_n\in {\mathbb K}$, we have
$M_{\lambda _{({\mathbb K}{\mathbb Z}_n)^*}(x)}=
diag (d_1, ..., d_n)$, the diagonal matrix with entries $d_1, ..., d_n$, q.e.d.

On the other hand, by using the formula for $\lambda _H$ given above,
we can see that
$\lambda _{{\mathbb K}{\mathbb Z}_n}(e_k)(p_j)=e_k\rightharpoonup p_j$,
for all $1\leq j, k\leq n$, and since we have $(e_k\rightharpoonup p_j)(e_i)=
p_j(e_ie_k)=p_j(e_{k+i-1})=\delta _{j, k+i-1}$, for all
$1\leq i, j, k\leq n$, we obtain $e_k\rightharpoonup p_j=p_{j-k+1 (mod \;n)}$,
that is $\lambda _{{\mathbb K}{\mathbb Z}_n}(e_k)(p_j)=p_{j-k+1 (mod \;n)}$,
for all $1\leq j, k\leq n$, and this means exactly that
$M_{\lambda _{{\mathbb K}{\mathbb Z}_n}(e_k)}=circ (0, 0, ..., 1, ..., 0)$,
where $1$ is in the $k^{th}$ position. Thus, for an arbitrary element
$y=c_1e_1+...+c_ne_n\in {{\mathbb K}{\mathbb Z}_n}$,
with $c_1, ..., c_n \in {\mathbb K}$, we have
$M_{\lambda _{{\mathbb K}{\mathbb Z}_n}(y)}=circ (c_1, ..., c_n)$, q.e.d.
\end{proof}

Let us recall some facts from \cite{cap}. If $X$ is an (associative unital)
algebra (with multiplication denoted by $x\otimes y\mapsto xy$ for all
$x, y\in X$)
and $A$, $B$ are subalgebras of $X$, we say that $X$ factorizes as
$X=AB$ if the map $A\ot B\rightarrow X$, $a\ot b\mapsto ab$, for all $a\in A$,
$b\in B$, is a linear isomorphism. This is equivalent to saying that there
exists
a so called {\em twisting map} $R:B\ot A\rightarrow A\ot B$ such that
$X$ is isomorphic as an algebra to the so called {\em twisted tensor product}
$A\ot _RB$.

It is well known that any smash product (such as $({\mathbb K}{\mathbb Z}_n)^*
\# {\mathbb K}{\mathbb Z}_n)$ is a particular case of a twisted
tensor product.
Since $({\mathbb K}{\mathbb Z}_n)^*\simeq D_{{\mathbb K}}^n$
and ${\mathbb K}{\mathbb Z}_n\simeq  C_{{\mathbb K}}^n$ as algebras,
we can conclude:
\begin{proposition}
The algebra of $n\times n$ matrices factorizes as $M_n({\mathbb K})=
 D_{{\mathbb K}}^n C_{{\mathbb K}}^n$.
\end{proposition}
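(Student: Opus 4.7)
The plan is to leverage the Heisenberg-double isomorphism $\lambda :({\mathbb K}{\mathbb Z}_n)^*\#{\mathbb K}{\mathbb Z}_n\simeq End(({\mathbb K}{\mathbb Z}_n)^*)\equiv M_n({\mathbb K})$ together with the universal factorization property of smash products as twisted tensor products. Once we know that a smash product $A\#H$ always factorizes as an algebra with factors the canonical subalgebra copies of $A$ and $H$, and that under $\lambda $ these subalgebras become $D_{\mathbb K}^n$ and $C_{\mathbb K}^n$ (the preceding proposition), the factorization of $M_n({\mathbb K})$ will follow automatically.

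Concretely, I would first recall that in $({\mathbb K}{\mathbb Z}_n)^*\#{\mathbb K}{\mathbb Z}_n$ every element has the form $\sum _i\varphi _i\# h_i=\sum _i(\varphi _i\# 1)(1\# h_i)$, so the multiplication map $({\mathbb K}{\mathbb Z}_n)^*\otimes {\mathbb K}{\mathbb Z}_n\rightarrow ({\mathbb K}{\mathbb Z}_n)^*\#{\mathbb K}{\mathbb Z}_n$, $\varphi \otimes h\mapsto (\varphi \# 1)(1\# h)$, is a linear bijection. This is precisely the statement that the smash product factorizes in the sense of \cite{cap}. Transporting this factorization through the algebra isomorphism $\lambda $ will give a factorization of $M_n({\mathbb K})$ as $\lambda _{H^*}(({\mathbb K}{\mathbb Z}_n)^*)\cdot \lambda _{{\mathbb K}{\mathbb Z}_n}({\mathbb K}{\mathbb Z}_n)$, and by the previous proposition these images are exactly $D_{\mathbb K}^n$ and $C_{\mathbb K}^n$, yielding $M_n({\mathbb K})=D_{\mathbb K}^n C_{\mathbb K}^n$.

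The only small point to verify is that the restrictions of $\lambda $ to the canonical subalgebra copies of $({\mathbb K}{\mathbb Z}_n)^*$ and ${\mathbb K}{\mathbb Z}_n$ inside the smash product agree with the embeddings $\lambda _{H^*}$ and $\lambda _{{\mathbb K}{\mathbb Z}_n}$; this is read off directly from the defining formula $\lambda (\varphi \# h)(\psi )=\varphi *(h\rightharpoonup \psi )$ by setting $h=1$ and $\varphi =\varepsilon $ in turn. Consequently I do not expect any genuine obstacle — the real work has already been done in setting up the Heisenberg double and in identifying the two subalgebras with $D_{\mathbb K}^n$ and $C_{\mathbb K}^n$; the factorization of $M_n({\mathbb K})$ is a formal corollary obtained by chasing the isomorphism $\lambda $.
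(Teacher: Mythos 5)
Your argument is correct and is essentially the paper's own: the paper likewise deduces the factorization from the Heisenberg-double isomorphism $\lambda :({\mathbb K}{\mathbb Z}_n)^*\#{\mathbb K}{\mathbb Z}_n\simeq End(({\mathbb K}{\mathbb Z}_n)^*)$, the fact that any smash product is a twisted tensor product (hence factorizes as the product of its two canonical subalgebras), and the preceding proposition identifying those subalgebras with $D_{\mathbb K}^n$ and $C_{\mathbb K}^n$. You merely spell out the linear bijectivity of $\varphi \otimes h\mapsto (\varphi \#1)(1\#h)$ and the compatibility of the restrictions of $\lambda $, which the paper leaves implicit.
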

\begin{remark} {\em
Assume that
${\mathbb K}={\mathbb C}$. In this case, it is known that the Hopf algebra
${\mathbb C}{\mathbb Z}_n$ is selfdual, that is
${\mathbb C}{\mathbb Z}_n$ is isomorphic as a Hopf algebra to
$({\mathbb C}{\mathbb Z}_n)^*$. If we consider
$\omega =cos (\frac{2\pi }{n})+i sin (\frac{2\pi }{n})$, an explicit
isomorphism $\phi :{\mathbb C}{\mathbb Z}_n\simeq
({\mathbb C}{\mathbb Z}_n)^*$ is defined by
$\phi (e_i)=\sum _{j=1}^n \omega ^{(i-1)(j-1)}p_j$, for all
$i\in \{1, ..., n\}$, where $\{e_1, ..., e_n\}$ is the standard basis in
${\mathbb C}{\mathbb Z}_n$ and $\{p_1, ..., p_n\}$ is its dual
basis in $({\mathbb C}{\mathbb Z}_n)^*$. We consider the algebra
isomorphisms ${\mathbb C}{\mathbb Z}_n\simeq C_{\mathbb{C}}^n$ and
$({\mathbb C}{\mathbb Z}_n)^*\simeq D_{\mathbb{C}}^n$ and thus we obtain an algebra isomorphism
$\psi :C_{\mathbb{C}}^n\simeq D_{\mathbb{C}}^n$. If
$C=circ (c_1, ..., c_n)$ is a circulant matrix, an easy computation
shows that $\psi (C)=diag (\lambda _1, ..., \lambda _n)$,
where $\{\lambda _1, ..., \lambda _n\}$ are the eigenvalues of $C$
defined by $\lambda _j=c_1+c_2\omega ^{j-1}+c_3\omega ^{2(j-1)}+...+
c_n\omega ^{(n-1)(j-1)}$, for all $j\in \{1, ..., n\}$. In particular,
this shows immediately the known fact that if $X, Y$ are circulant matrices with
eigenvalues $\lambda _1, ..., \lambda _n$ and respectively
$\beta _1, ..., \beta _n$, then the circulant matrix $X+Y$ has
eigenvalues $\lambda _1+\beta _1, ..., \lambda _n+\beta _n$ and the
circulant matrix $XY$ has eigenvalues
$\lambda _1\beta _1, ..., \lambda _n\beta _n$.}
\end{remark}

We denote by $P_n$ the {\em fundamental} circulant $n\times n$ matrix,
defined by $P_n=circ(0, 1, 0, ..., 0)$. It has the property that
$P_n^n=I_n$ and, if $C=circ(c_1, ..., c_n)$, then $C=c_1I_n+c_2P_n+
c_3P_n^2+...+c_nP_n^{n-1}$.
If we denote as before by $\{e_1, ..., e_n\}$ the standard basis of
${\mathbb K}{\mathbb Z}_n$, we obviously have $e_2^2=e_3$,
$e_2^3=e_4$, ..., $e_2^{n-1}=e_n$. Thus, since $g$ is an algebra map and
$g(e_2)=P_n$, we have, for all $2\leq i\leq n$,
$g(e_i)=g(e_2^{i-1})=P_n^{i-1}$. Recall also that, for any $1\leq i\leq n$,
the inverse of $e_i$ in ${\mathbb K}{\mathbb Z}_n$ is
$e_{n-i+2}$.

We look again at the algebra isomorphism $g:{\mathbb K}{\mathbb Z}_n
\simeq C_{{\mathbb K}}^n$. Since ${\mathbb K}{\mathbb Z}_n$ is a
Hopf algebra, we can transfer its structure to $C_{{\mathbb K}}^n$ via $g$,
and thus $C_{{\mathbb K}}^n$ becomes a Hopf algebra. We will write down
its counit, comultiplication and antipode.

Let $C=circ (c_1, ..., c_n)$ be a circulant matrix. It is easy to see that
the transferred counit is defined by
$\varepsilon :C_{{\mathbb K}}^n\rightarrow {\mathbb K}$,
$\varepsilon (C)=c_1+c_2+...+c_n$.

We compute now the comultiplication $\Delta :C_{{\mathbb K}}^n\rightarrow
C_{{\mathbb K}}^n\ot C_{{\mathbb K}}^n$:
\begin{eqnarray*}
\Delta (C)&=&(g\otimes g)\circ \Delta _{{\mathbb K}{\mathbb Z}_n}\circ
g^{-1}(C)\\
&=&(g\otimes g)\circ \Delta _{{\mathbb K}{\mathbb Z}_n}(c_1e_1+...
+c_ne_n)\\
&=&(g\otimes g)(c_1e_1\otimes e_1+...+c_ne_n\otimes e_n)\\
&=&c_1g(e_1)\otimes g(e_1)+...+c_ng(e_n)\otimes g(e_n)\\
&=&c_1I_n\otimes I_n+c_2P_n\otimes P_n+c_3P_n^2\otimes P_n^2+...+
c_nP_n^{n-1}\otimes P_n^{n-1}.
\end{eqnarray*}
Note that the counit property $(\varepsilon \otimes id)\circ \Delta =id$
applied to $C$ becomes, by using the above formulae and the fact that
$\varepsilon (P_n)=1$,
\begin{eqnarray*}
C&=&(\varepsilon \otimes id)\circ \Delta (C)\\
&=&(\varepsilon \otimes id)(c_1I_n\otimes I_n+c_2P_n\otimes P_n+
c_3P_n^2\otimes P_n^2+...+
c_nP_n^{n-1}\otimes P_n^{n-1})\\
&=&c_1I_n+c_2P_n+c_3P_n^2+...+c_nP_n^{n-1},
\end{eqnarray*}
that is, the counit property is equivalent to the basic property
of the fundamental circulant matrix.
We compute now the formula for the antipode $S:C_{{\mathbb K}}^n
\rightarrow C_{{\mathbb K}}^n$:
\begin{eqnarray*}
S(C)&=&g\circ S_{{\mathbb K}{\mathbb Z}_n}\circ g^{-1}(C)\\
&=&g\circ S_{{\mathbb K}{\mathbb Z}_n}(c_1e_1+...+c_ne_n)\\
&=&g(c_1e_1+c_2e_2^{-1}+...+c_ne_n^{-1})\\
&=&g(c_1e_1+c_2e_n+c_3e_{n-1}+...+c_ne_2)\\
&=&circ (c_1, c_n, c_{n-1}, ..., c_3, c_2)\\
&=&circ(c_1, c_2, ..., c_n)^T.
\end{eqnarray*}
That is, $S(C)$ is just the transpose of $C$.

If we denote by $Q_n=circ (0, 0, ...,0, 1)=P_n^T$ the transpose of
$P_n$, then the antipode property $S(C_{(1)})C_{(2)}=\varepsilon (C)I_n$,
with notation $\Delta (C)=C_{(1)}\otimes C_{(2)}$, is equivalent to the
following relation involving the matrices $P_n$ and $Q_n$:
\begin{eqnarray*}
&&c_1I_n+c_2Q_nP_n+c_3Q_n^2P_n^2+...+c_nQ_n^{n-1}P_n^{n-1}=
(c_1+...+c_n)I_n,
\end{eqnarray*}
which is obviously true because we actually have
$Q_nP_n=I_n$, that is $Q_n$ is the inverse of $P_n$.
\begin{remark}
It is well known that the element $x=\frac{1}{n}(e_1+...+e_n)$ is a so called
integral in ${\mathbb K}{\mathbb Z}_n$,
that is it satisfies the condition
$hx=\varepsilon (h)x$ for all $h\in {\mathbb K}{\mathbb Z}_n$. If
we write $h=c_1e_1+...+c_ne_n$, with $c_1, ..., c_n\in {\mathbb K}$,
then the equality $hx=\varepsilon (h)x$ may be transferred in
$C_{{\mathbb K}}^n$ via the isomorphism $g$, and we obtain
$g(h)g(x)=\varepsilon (h)g(x)$, that is
\begin{eqnarray*}
&&circ(c_1, ..., c_n)circ(1, 1, ..., 1)=(c_1+...+c_n)circ(1, 1, ..., 1),
\end{eqnarray*}
and this is equivalent to the fact that $c_1+...+c_n$ is an eigenvalue
for $circ(c_1, ..., c_n)$ with eigenvector $(1, 1, ..., 1)^T$.
\end{remark}
\section{Brandt algebras and lattices in circulat matrices}
\setcounter{equation}{0}
${\;\;\;\;}$ In this section we assume that the base field is ${\mathbb C}$.
\begin{theorem} \label{blockcirc}
Let $C=circ (c_1, ..., c_n)$ be a circulant matrix. We denote by
$\Delta $ the comultiplication of the Hopf algebra $C_{{\mathbb C}}^n$ and by
$P_n$ the fundamental circulant matrix $circ (0, 1, 0, ..., 0)$.
Then $\Delta (C)$, regarded as an $n^2\times n^2$ matrix, is the block
circulant with circulant blocks matrix $circ (c_1I_n, c_2P_n, ..., c_nP_n^{n-1})$.
Moreover, the eigenvalues of $\Delta (C)$ are the eigenvalues of $C$, each one
with multiplicity $n$.
\end{theorem}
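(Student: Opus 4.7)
The plan is to leverage the explicit formula already derived in the previous section, namely
$$\Delta(C) = c_1 I_n\otimes I_n + c_2 P_n\otimes P_n + c_3 P_n^2\otimes P_n^2 + \cdots + c_n P_n^{n-1}\otimes P_n^{n-1},$$
and interpret the Kronecker products as $n^2\times n^2$ block matrices in the standard way: $A\otimes B$ is the block matrix whose $(i,j)$-block is $a_{ij}B$.

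For the first claim, I would compute the $(i,j)$-block of each summand. The matrix $P_n^{k-1}$ equals $circ(0,\ldots,0,1,0,\ldots,0)$ with the $1$ in position $k$, so $(P_n^{k-1})_{ij}=1$ precisely when $j-i+1\equiv k\pmod n$, and vanishes otherwise. Therefore the $(i,j)$-block of $P_n^{k-1}\otimes P_n^{k-1}$ is $P_n^{k-1}$ if $j-i+1\equiv k\pmod n$ and is $0$ otherwise. Summing over $k$, exactly one term survives in the $(i,j)$-block of $\Delta(C)$, yielding $c_{j-i+1}P_n^{j-i}$ (with subscripts read mod $n$). This is exactly the $(i,j)$-block of the block circulant $circ(c_1 I_n, c_2 P_n, \ldots, c_n P_n^{n-1})$, whose blocks are themselves circulant matrices, which verifies the structural part of the statement.

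For the eigenvalue count, I would exploit the fact that all $P_n^{m-1}$ commute and are simultaneously diagonalized by the Fourier basis $x_j=(1,\omega^{j-1},\omega^{2(j-1)},\ldots,\omega^{(n-1)(j-1)})^T$ with $\omega=\cos(\tfrac{2\pi}{n})+i\sin(\tfrac{2\pi}{n})$, so that $P_n x_j=\omega^{j-1}x_j$. Consequently $x_j\otimes x_k$ is a simultaneous eigenvector of every $P_n^{m-1}\otimes P_n^{m-1}$ with eigenvalue $\omega^{(m-1)(j+k-2)}$, and adding up yields
$$\Delta(C)(x_j\otimes x_k)=\Bigl(\sum_{m=1}^n c_m\,\omega^{(m-1)(j+k-2)}\Bigr)(x_j\otimes x_k)=p_C(\omega^{j+k-2})(x_j\otimes x_k),$$
which is one of the known eigenvalues $\lambda_{j+k-1}$ of $C$ (indices mod $n$). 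The $n^2$ vectors $x_j\otimes x_k$ form a basis of $\mathbb{C}^{n^2}$, and for each fixed residue $\ell\in\{0,1,\ldots,n-1\}$ the equation $j+k-2\equiv\ell\pmod n$ has exactly $n$ solutions $(j,k)\in\{1,\ldots,n\}^2$, giving each eigenvalue of $C$ multiplicity $n$.

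The main obstacle I anticipate is purely bookkeeping: keeping the Kronecker-product-to-block-matrix convention consistent with the circulant indexing convention $(circ(\cdot))_{ij}=c_{j-i+1}$ throughout, and being careful that subscripts are read modulo $n$ both in the block identification and in the counting of pairs $(j,k)$.
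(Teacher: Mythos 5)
Your proof is correct and follows essentially the same route as the paper: both start from the formula $\Delta(C)=\sum_{m=1}^n c_m P_n^{m-1}\otimes P_n^{m-1}$, identify the block structure, and diagonalize via the Fourier basis to find that the eigenvalues are $p_C(\omega^{j+k-2})$, each value of $C$ occurring $n$ times. The only cosmetic difference is that you work directly with the Kronecker eigenvectors $x_j\otimes x_k$, whereas the paper phrases the same computation as a block-diagonalization of the block circulant with circulant blocks; your version is, if anything, slightly more self-contained.
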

\begin{proof}
The first statement follows easily from the formula
$\Delta (C)=c_1I_n\otimes I_n+c_2P_n\otimes P_n+c_3P_n^2\otimes P_n^2+...+
c_nP_n^{n-1}\otimes P_n^{n-1}$. To prove the second statement,
one verifies first that the block diagonal matrix that
diagonalizes $\Delta (C)$ is
\begin{eqnarray*}
&&diag (\Lambda _1+\Lambda _2+...+\Lambda _n,
\Lambda _1+\omega \Lambda _2+...+\omega ^{n-1}\Lambda _n, ...,
\Lambda _1+\omega ^{n-1}\Lambda _2+...+\omega ^{(n-1)^2}\Lambda _n),
\end{eqnarray*}
where $\omega =cos (\frac{2\pi }{n})+isin (\frac{2\pi }{n})$, and
\begin{eqnarray*}
&&\Lambda _1=c_1I_n, \;\;\;\Lambda _2=c_2diag (1, \omega, ..., \omega ^{n-1}),
..., \Lambda _n=c_ndiag (1, \omega ^{n-1}, ..., \omega ^{(n-1)^2}),
\end{eqnarray*}
and the eigenvalues of $\Delta (C)$ are still $p_C(1), p_C(\omega ), ...,
p_C(\omega ^{n-1})$, each one with multiplicity $n$.
\end{proof}

Recalling that the algebra of complex circulant matrices is isomorphic to ${\mathbb C}{\mathbb Z}_n$,
we can introduce the following analogue of the concept in \cite{ak}:
\begin{definition}
A set $B$ of complex circulant matrices is called an {\em integral (rational) Brandt algebra}
if
$q_i(a), q_i(b), q_i(a+b), q_i(ab)\in {\mathbb Z} ({\mathbb Q})$, for all $a, b\in B$ and $1\leq i\leq n$,
where $q_i$ are the forms defined in Corollary \ref{defforms}, transferred to $C^n_{\mathbb  C}$ as
in Remark \ref{transf}.
\end{definition}
\begin{proposition}
The set of complex circulant matrices that have integral (rational) eigenvalues
is an integral (rational) Brandt algebra.
\end{proposition}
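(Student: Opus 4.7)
The plan is to reduce the statement to the observation (already made in the Remark on selfduality of $\mathbb{C}\mathbb{Z}_n$) that all $n\times n$ circulant matrices are simultaneously diagonalized by the DFT-type change of basis, so the eigenvalues of sums and products of circulants are the pointwise sums and products of eigenvalues. Combined with the identity $q_i(x)=s_i(\lambda_1,\ldots,\lambda_n)$ where $s_i$ is the $i$th elementary symmetric polynomial (used in the proof of $q_1(\overline{x})=q_{n-1}(x)$), this will make each $q_i$ of $a$, $b$, $a+b$, $ab$ an integer (rational) polynomial in the eigenvalues of $a$ and $b$, which lie in $\mathbb{Z}$ (resp. $\mathbb{Q}$) by hypothesis.

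More concretely, first I would let $a,b$ be circulant matrices whose eigenvalues $\lambda_1,\ldots,\lambda_n$ (of $a$) and $\mu_1,\ldots,\mu_n$ (of $b$) all lie in $\mathbb{Z}$ (resp. $\mathbb{Q}$). Recalling from the Remark that the isomorphism $\psi:C_{\mathbb{C}}^n\simeq D_{\mathbb{C}}^n$ sends each circulant to the diagonal matrix of its eigenvalues (with a fixed ordering coming from the common eigenvectors $x_j$ written in the preliminaries), I would conclude that the eigenvalues of $a+b$ are $\lambda_j+\mu_j$ and those of $ab$ are $\lambda_j\mu_j$, for $1\leq j\leq n$.

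Next, I would invoke the identity $q_i(x)=s_i(\lambda_1,\ldots,\lambda_n)$ for any circulant $x$ with eigenvalues $\lambda_j$ (this is exactly what was used in the proof of the proposition $q_1(\overline{x})=q_{n-1}(x)$, and is immediate from Corollary~\ref{defforms} applied to the characteristic polynomial). Since $s_i$ has integer coefficients, each of
\[
q_i(a)=s_i(\lambda_1,\ldots,\lambda_n),\quad q_i(b)=s_i(\mu_1,\ldots,\mu_n),
\]
\[
q_i(a+b)=s_i(\lambda_1+\mu_1,\ldots,\lambda_n+\mu_n),\quad q_i(ab)=s_i(\lambda_1\mu_1,\ldots,\lambda_n\mu_n),
\]
is a polynomial with integer coefficients evaluated at elements of $\mathbb{Z}$ (respectively $\mathbb{Q}$), hence lies in $\mathbb{Z}$ (respectively in $\mathbb{Q}$), proving that the set in question is closed in the required sense.

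There is no real obstacle: the only delicate point is making sure one uses the \emph{same} indexing of eigenvalues for $a$ and $b$ when passing to $a+b$ and $ab$. This is guaranteed by the fact that all circulant matrices share the eigenvectors $x_j=(1,\omega^{j-1},\ldots,\omega^{(n-1)(j-1)})^T$ recalled in the Preliminaries; equivalently, by the algebra isomorphism $\psi$ with $D_{\mathbb{C}}^n$, under which addition and multiplication of circulants translate to coordinatewise addition and multiplication of the eigenvalue tuples.
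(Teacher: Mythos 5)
Your proposal is correct and follows essentially the same route as the paper: the paper's proof also invokes the fact that eigenvalues of sums (products) of circulants are the pointwise sums (products) of eigenvalues together with the identity $q_i(C)=s_i(\lambda_1,\ldots,\lambda_n)$. You have simply spelled out the details (the integer coefficients of the $s_i$ and the consistency of the eigenvalue indexing via the common eigenvectors) that the paper leaves implicit.
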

\begin{proof}
Follows immediately from the fact that the eigenvalues of a sum (product) of
circulant matrices are the sums (products) of the eigenvalues of the given matrices and
by using the fact that for a circulant matrix $C$ all $q_i(C)$ are symmetric polynomials
in the eigenvalues of $C$.
\end{proof}
\begin{proposition}
The set of complex circulant matrices $C=circ (c_1, ..., c_n)$ that have integer (rational) eigenvalues
is given by
\begin{eqnarray*}
&&(c_1, c_2, ..., c_n)^T=\frac{1}{n}M(\lambda _1, ..., \lambda _n)^T,
\end{eqnarray*}
where $\lambda _1, ..., \lambda _n$ are integer (rational) numbers, and the $n\times n$ matrix $M$ has
$\overline{\omega ^{(i-1)(j-1)}}$ as the $(i, j)$ entry,
where $\omega =cos (\frac{2\pi }{n})+isin (\frac{2\pi }{n})$. Moreover, if the elements $\lambda _i$ satisfy
the extra condition $\lambda _{k+1}=\lambda _{n-k+1}$, for all $1\leq k\leq n-1$, then the matrix $C$
has real entries.
\end{proposition}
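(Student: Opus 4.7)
The plan is to invert the formula for the eigenvalues of a circulant matrix recalled in the Preliminaries, and then to analyze the reality constraint separately.

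First I would set up the eigenvalue relation in matrix form. From the Preliminaries, the eigenvalues of $C=circ(c_1,\ldots,c_n)$ are $\lambda_j = \sum_{i=1}^n c_i\,\omega^{(i-1)(j-1)}$ for $1\le j\le n$. Writing $\lambda=(\lambda_1,\ldots,\lambda_n)^T$ and $c=(c_1,\ldots,c_n)^T$, this reads $\lambda = Fc$ where $F$ is the (symmetric) Fourier matrix with $(j,i)$-entry $\omega^{(i-1)(j-1)}$. The single analytic input I need is the orthogonality relation $\sum_{k=0}^{n-1}\omega^{km}=n$ if $n\mid m$ and $0$ otherwise, which follows from the geometric sum formula since $\omega$ is a primitive $n$-th root of unity. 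This orthogonality yields $\bigl(\tfrac1n\overline{F}\bigr)F=I_n$, hence $F^{-1}=\tfrac1n\overline{F}=\tfrac1n M$ with $M_{ij}=\overline{\omega^{(i-1)(j-1)}}$. Thus $c=\tfrac{1}{n}M\lambda$, which is exactly the asserted formula. Because this is a linear bijection between the $\lambda$-tuples and the circulants, and because the $\lambda_j$ produced by a given $c$ are precisely the eigenvalues of the associated circulant, restricting $\lambda_1,\ldots,\lambda_n$ to $\mathbb{Z}$ (respectively $\mathbb{Q}$) parameterizes exactly the circulants with integer (respectively rational) eigenvalues.

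For the second part I would argue reality entry by entry. Since $\overline{\omega}=\omega^{-1}$ and the $\lambda_j$ are real, I start from $c_i=\tfrac1n\sum_{j=1}^n\omega^{-(i-1)(j-1)}\lambda_j$ and compute $\overline{c_i}=\tfrac1n\sum_{j=1}^n\omega^{(i-1)(j-1)}\lambda_j$. I would then apply the change of summation index $j\mapsto n-j+2$, which fixes $j=1$ and is an involution of $\{2,\ldots,n\}$. Using $\omega^n=1$, the exponent becomes $\omega^{(i-1)(n-j+1)}=\omega^{-(i-1)(j-1)}$, while the hypothesis $\lambda_{k+1}=\lambda_{n-k+1}$ for $1\le k\le n-1$ (equivalently $\lambda_j=\lambda_{n-j+2}$ for $j\ge 2$) matches up the corresponding eigenvalues. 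The two expressions therefore coincide and $c_i=\overline{c_i}\in\mathbb{R}$.

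The argument is essentially formal once the orthogonality of Fourier characters is in hand; the only obstacle is bookkeeping with indices mod $n$, both when verifying that $\tfrac1n\overline{F}$ really inverts $F$ and when performing the index involution in the reality step. Keeping the convention $\mu_1=1$ and subscripts mod $n$ used throughout the paper makes these manipulations routine.
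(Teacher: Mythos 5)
Your proof is correct and follows essentially the same route as the paper: both invert the eigenvalue relation $\lambda = Ac$ via $A^{-1}=\tfrac1n M$ (you justify this by the root-of-unity orthogonality, which the paper leaves implicit), and your index involution $j\mapsto n-j+2$ in the reality step is exactly the paper's observation that column $k+1$ of $M$ is the conjugate of column $n-k+1$.
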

\begin{proof}
If we consider the matrix $A=(a_{ij})$ such that $a_{ij}=\omega ^{(i-1)(j-1)}$, it is easy to see that
$A(c_1, ..., c_n)^T=(p_C(1), p_C(\omega ), ..., p_C(\omega ^{n-1}))$, and the result follows because
the inverse of $A$ is $\frac{1}{n}M$. For the second statement, just note that in the matrix $M$, the
column $k+1$ is the conjugate of the column $n-k+1$.
\end{proof}

\begin{corollary}
The set of block circulants with circulant blocks $\Delta (B)$, where $B$ is the set of circulant
matrices with integral (rational) eigenvalues and $\Delta $ is the
comultiplication of the Hopf algebra $C_{{\mathbb C}}^n$,
is an integral (rational) Brandt algebra.
\end{corollary}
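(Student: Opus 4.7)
The plan is to combine two facts already established in the paper: that the comultiplication $\Delta $ of the Hopf algebra $C^n_{\mathbb C}$ is an algebra morphism, and that \thref{blockcirc} identifies the eigenvalues of $\Delta (C)$ with those of $C$ (each with multiplicity $n$).

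First, I would observe that for any $a,b\in B$ the identities $\Delta (a)+\Delta (b)=\Delta (a+b)$ and $\Delta (a)\Delta (b)=\Delta (ab)$ hold, because $\Delta $ is an algebra morphism. Since $B$ is itself a Brandt algebra (of circulants with integer, respectively rational, eigenvalues) by the previous proposition, both $a+b$ and $ab$ still lie in $B$; consequently the four matrices $\Delta (a)$, $\Delta (b)$, $\Delta (a+b)$, $\Delta (ab)$ all belong to $\Delta (B)$.

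Second, I would invoke \thref{blockcirc} to conclude that each of these $n^2\times n^2$ block circulant with circulant blocks matrices has eigenvalues equal to the eigenvalues of the corresponding element of $B$, each repeated $n$ times. In particular, all these eigenvalues are integers (respectively rationals). Since the forms $q_i$, $1\leq i\leq n^2$, associated to an $n^2\times n^2$ matrix are (up to sign) the coefficients of its characteristic polynomial, they are elementary symmetric polynomials in the eigenvalues, and hence take integer (respectively rational) values. This is exactly the Brandt algebra condition.

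There is no real obstacle; the whole argument is essentially a bookkeeping consequence of the fact that $\Delta $ preserves sums and products combined with \thref{blockcirc}. The only point worth flagging is a mild notational extension of the definition of Brandt algebra from circulant $n\times n$ matrices to their images under $\Delta $ in $M_{n^2}(\mathbb C)$, which are block circulants with circulant blocks rather than genuine circulants; the forms $q_i$ are interpreted in the obvious way through the characteristic polynomial.
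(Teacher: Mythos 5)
Your proposal is correct and follows essentially the same route as the paper: the paper's one-line proof also rests on Theorem \ref{blockcirc} (the eigenvalues of $\Delta (b)$ coincide with those of $b$, each with multiplicity $n$) together with the fact, implicit from the preceding proposition, that sums and products of elements of $B$ again have integral (rational) eigenvalues. You merely spell out the bookkeeping the paper leaves tacit, namely that $\Delta$ is an algebra morphism so $\Delta (a)+\Delta (b)=\Delta (a+b)$ and $\Delta (a)\Delta (b)=\Delta (ab)$, and that the forms $q_i$ are symmetric polynomials in the eigenvalues.
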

\begin{proof}
The result follows because, as we have seen in Theorem \ref{blockcirc}, the eigenvalues of
$\Delta (b)$, for any $b\in B$, coincide with the eigenvalues of $b$.
\end{proof}

Now we can study lattices in the algebra of complex circulant matrices. This study takes us to
some conditions about when a circulant matrix is obtained as integral linear combinations of some
elements of this algebra. Here we will focus to the subset of circulant matrices that have
integral entries.
\begin{theorem}
We consider the lattice ${\mathbb Z}v_1+{\mathbb Z}v_2+...+{\mathbb Z}v_n$ in the
algebra of complex circulant $n\times n$ matrices, generated by the linearly
independent vectors
\begin{eqnarray*}
&&v_1=c_{11}I_n+c_{12}P_n+...+c_{1n}P_n^{n-1}, \\
&&v_2=c_{21}I_n+c_{22}P_n+...+c_{2n}P_n^{n-1}, \\
&&..........................................................\\
&&v_n=c_{n1}I_n+c_{n2}P_n+...+c_{nn}P_n^{n-1}.
\end{eqnarray*}
We denote by $C$ the $n\times n$ matrix with entries $c_{ij}$. If all the entries
of the matrix $C^{-1}$ are integral, then any circulant matrix with integral entries
belongs to this lattice, that is, may be written as $a_1v_1+a_2v_2+...+a_nv_n$, with
$a_1, ..., a_n\in {\mathbb Z}$.
\end{theorem}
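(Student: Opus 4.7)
The plan is to reduce the statement to a routine linear-algebra computation over $\mathbb{Z}$. First, I would recall from the previous section that $\{I_n, P_n, P_n^2, \ldots, P_n^{n-1}\}$ is a basis of the algebra $C_{\mathbb{C}}^n$; in particular, every circulant matrix with integer entries may be written uniquely as
$$M = b_1 I_n + b_2 P_n + \cdots + b_n P_n^{n-1}, \quad b_1, \ldots, b_n \in \mathbb{Z}.$$
Asking for an expression $M = a_1 v_1 + \cdots + a_n v_n$ with $a_k \in \mathbb{Z}$ is therefore the same as comparing coefficients of each power $P_n^{j-1}$ on the two sides of this equation.

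Next, I would substitute the definition of each $v_k$ into $\sum_{k} a_k v_k$ and collect the coefficient of $P_n^{j-1}$. This yields the system of $n$ scalar equations
$$\sum_{k=1}^n a_k c_{kj} = b_j, \quad j = 1, \ldots, n,$$
which in matrix form reads $C^T a = b$, with $a = (a_1, \ldots, a_n)^T$ and $b = (b_1, \ldots, b_n)^T$. The linear independence of $v_1, \ldots, v_n$ forces $C$, and hence $C^T$, to be invertible, so the system admits the unique solution
$$a = (C^T)^{-1} b = (C^{-1})^T b.$$

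Finally, I would invoke the hypothesis: since $C^{-1}$ has integer entries, so does its transpose $(C^{-1})^T$; as $b$ is an integer column vector, the product $(C^{-1})^T b$ is again an integer vector, so $a_1, \ldots, a_n \in \mathbb{Z}$. This produces the required integer expansion $M = a_1 v_1 + \cdots + a_n v_n$ and completes the argument. There is no real obstacle here, as the proof is essentially linear algebra over $\mathbb{Z}$; the only point that needs care is the transpose in the system $C^T a = b$, which arises because the rows of $C$ are the coordinate vectors of the generators $v_k$ in the basis $\{I_n, P_n, \ldots, P_n^{n-1}\}$.
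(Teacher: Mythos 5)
Your proposal is correct and follows essentially the same route as the paper: both arguments rest on expanding an integral circulant in the basis $\{I_n, P_n, \ldots, P_n^{n-1}\}$ and using the integrality of $C^{-1}$ to convert between this basis and the generators $v_1, \ldots, v_n$. You merely make the change of basis explicit as the system $C^T a = b$ (correctly handling the transpose), whereas the paper phrases it as each $P_n^{j-1}$ being an integral combination of the $v_k$.
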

\begin{proof}
The matrix $C$ is invertible because we assumed that $v_1, ..., v_n$ are linearly
independent. Since we assumed that the entries of $C^{-1}$ are integral, it follows that each of
the matrices $I_n$, $P_n$, $P_n^2$, ..., $P_n^{n-1}$ may be written as an integral linear
combination of the elements $v_1, ..., v_n$, and now the result follows because any
circulant matrix $circ (c_1, ..., c_n)$ may be written as
$circ(c_1, ..., c_n)=c_1I_n+c_2P_n+
c_3P_n^2+...+c_nP_n^{n-1}$.
\end{proof}

Moreover, if we consider the lattice ${\mathbb Z}\Delta (v_1)+{\mathbb Z}\Delta (v_2)
+...+{\mathbb Z}\Delta (v_n)$ in the set of block circulants with circulant blocks, if
$C^{-1}$ has integral entries then every block circulant with circulant blocks
of the form $circ (a_1I_n, a_2P_n, ..., a_nP_n^{n-1})$, with $a_1, ..., a_n\in {\mathbb Z}$
belongs to this lattice, that is
may be written as an
integral linear combination of $\Delta (v_1), ..., \Delta (v_n)$.
\begin{example} {\em
Consider the lattice of $3\times 3$ circulant matrices
${\mathbb Z}v_1+{\mathbb Z}v_2+{\mathbb Z}v_3$, with
\begin{eqnarray*}
&&v_1=-P_3+P_3^{2}, \\
&&v_2=-\frac{1}{3}I_3+\frac{1}{3}P_3+\frac{1}{3}P_3^{2}, \\
&&v_3=\frac{1}{3}I_3+\frac{2}{3}P_3-\frac{1}{3}P_3^{2}.
\end{eqnarray*}
The matrix of coefficients
\[ \left( \begin{array}{ccc}
0 & -1 & 1\\
-1/3 & 1/3 & 1/3\\
1/3 & 2/3 & -1/3
\end{array} \right)\]
has an inverse with integral entries, so every circulant $3\times 3$ matrix
with integral entries may be
written as $av_1+bv_2+cv_3$, with
$a, b, c\in {\mathbb Z}$. Moreover, each matrix
$circ (a_1I_3, a_2P_3, a_3P_3^{2})$, with $a_1, a_2, a_3\in {\mathbb Z}$, may be
written as $m_1\Delta (v_1)+m_2\Delta (v_2)+m_3\Delta (v_3)$, with
$m_1, m_2, m_3\in {\mathbb Z}$.
}
\end{example}
\section{A class of generalized circulants}
\setcounter{equation}{0}
${\;\;\;\;}$There exist various generalizations of circulant matrices,
appeared around 1980, see for instance \cite{davis}, \cite{water} and references therein.
As proved by Waterhouse in \cite{water}, many of them are related to twisted group rings.
In this section we will introduce a certain class of matrices that generalize both
circulant and skew circulant matrices, also related to twisted group rings,
having the property that their eigenvalues can be read directly from the
entries of the matrices.

Consider again the cyclic group ${\mathbb Z}_n$ with elements denoted as before by
$e_1, ..., e_n$, and $F:{\mathbb Z}_n\times {\mathbb Z}_n\rightarrow
{\mathbb K}^*$ a two-cocycle (here ${\mathbb K}^*$ is the set of nonzero elements
in ${\mathbb K}$), that is $F$ satisfies
\begin{eqnarray*}
&&F(e_1, x)=F(x, e_1)=1, \;\;\; \forall \;x\in {\mathbb Z}_n, \\
&&F(x, y)F(xy, z)=F(y, z)F(x, yz), \;\;\; \forall \;x, y, z\in {\mathbb Z}_n.
\end{eqnarray*}
We can consider then the twisted group ring ${\mathbb K}_F{\mathbb Z}_n$,
which is an associative algebra (with unit $e_1$) obtained from the group ring
${\mathbb K}{\mathbb Z}_n$ by deforming its product using $F$, namely
$x\cdot _Fy=F(x, y)xy$, for all $x, y\in {\mathbb Z}_n$.

We have seen that ${\mathbb K}{\mathbb Z}_n$ may be embedded as an algebra in
$M_n({\mathbb K})$ (and the image of this embedding is the algebra of
circulant matrices $C^n_{{\mathbb K}}$). We can do something similar
for a twisted group ring ${\mathbb K}_F{\mathbb Z}_n$. Namely, define the map
$\lambda _F:{\mathbb K}_F{\mathbb Z}_n\rightarrow End (({\mathbb K}_F{\mathbb Z}_n)^*)$,
$\lambda _F(a)(\psi)=a\rightharpoonup \psi $, where
$(a\rightharpoonup \psi )(b)=\psi (ba)$, for all $a, b\in {\mathbb K}_F{\mathbb Z}_n$
and $\psi \in ({\mathbb K}_F{\mathbb Z}_n)^*$. If we denote by $\{p_j\}$ the basis of
$({\mathbb K}_F{\mathbb Z}_n)^*$ dual to the basis $\{e_j\}$ of
${\mathbb K}_F{\mathbb Z}_n$, one can check that we have
$\lambda _F(e_k)(p_j)=F(e_{j-k+1}, e_k)p_{j-k+1}$,
for all $j, k\in \{1,..., n\}$. If we identify as usual
$End (({\mathbb K}_F{\mathbb Z}_n)^*)\cong M_n({\mathbb K})$, we
obtain an algebra embedding $\lambda _F:{\mathbb K}_F{\mathbb Z}_n
\hookrightarrow M_n({\mathbb K})$, defined as follows: if
$x=c_1e_1+...+c_ne_n\in {\mathbb K}_F{\mathbb Z}_n$, then
$\lambda _F(x)$ is the $n\times n$ matrix whose $(i, j)$ entry
is $c_{j-i+1}F(e_i, e_{j-i+1})$. For instance, for $n=3$ and $n=4$ the
corresponding matrices are respectively given by

\[ \left( \begin{array}{ccc}
c_1 & c_2 & c_3\\
c_3F(e_2, e_3) & c_1 & c_2F(e_2, e_2)\\
c_2F(e_3, e_2) & c_3F(e_3, e_3) & c_1
\end{array} \right)\]

\[ \left( \begin{array}{cccc}
c_1 & c_2 & c_3 & c_4\\
c_4F(e_2, e_4) & c_1 & c_2F(e_2, e_2) & c_3F(e_2, e_3)\\
c_3F(e_3, e_3) & c_4F(e_3, e_4) & c_1 & c_2F(e_3, e_2)\\
c_2F(e_4, e_2) & c_3F(e_4, e_3) & c_4F(e_4, e_4) & c_1
\end{array} \right)\]

We denote by $C^n_{{\mathbb K}}(F)$ the image of the map
$\lambda _F$; so $C^n_{{\mathbb K}}(F)$ is an algebra,
isomorphic to ${\mathbb K}_F{\mathbb Z}_n$.

Assume now that the two-cocycle $F$ is trivial in the cohomology
group $H^2({\mathbb Z}_n, {\mathbb K}^*)$, that is there exists a
map $\mu : {\mathbb Z}_n\rightarrow {\mathbb K}^*$, with $\mu (e_1)=1$
(and with notation $\mu (e_i)=\mu _i$ for all $i\in \{2, ..., n\}$),
such that $F(e_i, e_j)=\mu (e_i)\mu (e_j) \mu (e_ie_j)^{-1}$,
for all $i, j\in \{1, ..., n\}$. We will denote the algebra
$C^n_{{\mathbb K}}(F)$ by $C^n_{{\mathbb K}}(\mu )$. Also,
for $x=c_1e_1+...+c_ne_n\in
{\mathbb K}_F{\mathbb Z}_n$, we will denote
\begin{eqnarray*}
&&\lambda _F(x):=\lambda _{\mu }(x)
:=circ (c_1, c_2, ..., c_n; \mu _2, ..., \mu _n),
\end{eqnarray*}
which is a matrix having $c_1, c_2, ..., c_n$ in the first row,
$c_1$ on the main diagonal and entry $c_{j-i+1}\frac{\mu _i\mu _{j-i+1}}
{\mu _j}$ in any other position $(i, j)$ (with the convention
$\mu _1=1$). Obviously, we have $circ (c_1, ..., c_n)=
circ (c_1, ..., c_n; 1, ..., 1)$.
For instance, for $n=3$, the matrix
$circ (c_1, c_2, c_3; a, b)$ is
\[ \left( \begin{array}{ccc}
c_1 & c_2 & c_3\\
c_3ab & c_1 & c_2a^2b^{-1}\\
c_2ab & c_3b^2a^{-1} & c_1
\end{array} \right)\]

We aim to find the eigenvalues and eigenvectors for a
matrix $circ (c_1, c_2, ..., c_n; \mu _2, ..., \mu _n)$.
For this, we will rely on the known fact that for the
two-cocycle $F$ given by $F(e_i, e_j)=\mu (e_i)\mu (e_j) \mu (e_ie_j)^{-1}$
the twisted group ring ${\mathbb K}_F{\mathbb Z}_n$ is
isomorphic to the group ring ${\mathbb K}{\mathbb Z}_n$,
an isomorphism being defined by $\varphi :{\mathbb K}_F{\mathbb Z}_n
\simeq {\mathbb K}{\mathbb Z}_n$, $\varphi (e_i)=\mu (e_i)e_i$ for all
$1\leq i\leq n$.  Thus, we have also an algebra isomorphism
$\Psi :C^n_{{\mathbb K}}(\mu )\simeq C^n_{{\mathbb K}}$ defined by
$\Psi (circ (c_1, ..., c_n;\mu _2, ..., \mu _n))=
circ (c_1, c_2\mu _2, c_3\mu _3, ..., c_n\mu _n)$ with inverse
$\Psi ^{-1}(circ (c_1, ..., c_n))=circ (c_1, \frac{c_2}{\mu _2}, ...,
\frac{c_n}{\mu _n}; \mu _2, ..., \mu _n)$.
\begin{proposition}
Assume that $\lambda $ is an eigenvalue for the matrix
$circ (c_1, c_2\mu _2, ..., c_n\mu _n)$,
with eigenvector $(x_1, x_2, ..., x_n)^T$. Then $\lambda $ is an eigenvalue for
$circ (c_1, c_2, ..., c_n; \mu _2,..., \mu _n)$ with eigenvector
$(x_1, x_2\mu _2, ..., x_n\mu _n)^T$.
\end{proposition}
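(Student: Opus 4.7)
The plan is to realize the algebra isomorphism $\Psi : C^n_{\mathbb{K}}(\mu) \simeq C^n_{\mathbb{K}}$ concretely as conjugation by a diagonal matrix, which immediately reduces the claim to the standard behavior of eigenvectors under similarity. Set $D = \text{diag}(1, \mu_2, \mu_3, \ldots, \mu_n)$, which is invertible since each $\mu_i \in \mathbb{K}^*$. I will verify that for any $A = circ(c_1, c_2, \ldots, c_n; \mu_2, \ldots, \mu_n)$, the conjugate $D^{-1}AD$ coincides with $circ(c_1, c_2\mu_2, \ldots, c_n\mu_n) = \Psi(A)$.

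This verification reduces to an entrywise calculation: the $(i,j)$ entry of $D^{-1}AD$ is $\mu_i^{-1} \cdot A_{ij} \cdot \mu_j$, and substituting the formula $A_{ij} = c_{j-i+1}\, \mu_i \mu_{j-i+1}/\mu_j$ (with the convention $\mu_1 = 1$, which correctly covers the diagonal case $j = i$ where both sides equal $c_1$), the factors telescope to $c_{j-i+1} \mu_{j-i+1}$. But this is precisely the $(i, j)$ entry of the circulant matrix with first row $(c_1, c_2\mu_2, \ldots, c_n\mu_n)$, so $D^{-1} A D = \Psi(A)$ as desired.

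Given this similarity relation, the proposition is immediate. Suppose $\Psi(A) x = \lambda x$ with $x = (x_1, \ldots, x_n)^T$. Then $D^{-1}AD \cdot x = \lambda x$, and multiplying both sides on the left by $D$ yields $A(Dx) = \lambda (Dx)$. Since
\[
Dx = (x_1, \mu_2 x_2, \ldots, \mu_n x_n)^T,
\]
this says precisely that $\lambda$ is an eigenvalue of $A = circ(c_1, c_2, \ldots, c_n; \mu_2, \ldots, \mu_n)$ with the claimed eigenvector.

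No serious obstacle is expected: the algebra isomorphism $\Psi$ is already in hand, so the entire content is recognizing it as a similarity transformation, which is a short and routine entrywise check. The only mild subtlety is keeping track of the convention $\mu_1 = 1$ to ensure the formula for $A_{ij}$ is used uniformly across diagonal and off-diagonal positions.
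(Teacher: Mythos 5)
Your proof is correct, and it takes a genuinely different and more direct route than the paper. The paper never identifies $\Psi$ as a similarity transformation; instead it embeds the eigenvector $(x_1,\dots,x_n)^T$ as the first column of the circulant matrix $circ(x_1,x_n,\dots,x_2)$, rewrites the eigenvalue equation (\ref{cucu}) as an equality of products of circulant matrices, applies the algebra map $\Psi^{-1}$ to that equality, and then extracts the first columns of both sides (which requires an explicit computation with the cocycle values $F(e_k,e_{n-k+2})=\mu_k\mu_{n-k+2}$). Your observation that $\Psi(A)=D^{-1}AD$ with $D=diag(1,\mu_2,\dots,\mu_n)$ --- verified by the one-line telescoping $\mu_i^{-1}\cdot c_{j-i+1}\mu_i\mu_{j-i+1}\mu_j^{-1}\cdot\mu_j=c_{j-i+1}\mu_{j-i+1}$, valid uniformly with subscripts mod $n$ and $\mu_1=1$ --- collapses all of this into the standard fact that similar matrices have the same eigenvalues with eigenvectors related by the conjugating matrix. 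It also buys more than the paper's argument: similarity immediately gives equality of characteristic polynomials and of eigenvalue multiplicities, and it explains conceptually why $\Psi$ is an algebra isomorphism in the first place. The paper's approach, by contrast, uses only the abstract multiplicativity of $\Psi$ and so would adapt to situations where the isomorphism is not realized by conjugation, but here the diagonal-conjugation description is available and is the cleaner argument.
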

\begin{proof}
In order to be able to use the algebra isomorphism $\Psi $, we need to
transform the equality
\begin{eqnarray}
&&circ (c_1, c_2\mu _2, ..., c_n\mu _n)(x_1, ..., x_n)^T=
\lambda (x_1, ..., x_n)^T  \label{cucu}
\end{eqnarray}
into a relation between circulant matrices. We consider the circulant
$circ (x_1, x_n, x_{n-1}, ..., x_3, x_2)$, and we remark
that the first column of
$circ (c_1, c_2\mu _2, ..., c_n\mu _n)circ (x_1, x_n, x_{n-1}, ..., x_3, x_2)$
is exactly $circ (c_1, c_2\mu _2, ..., c_n\mu _n)(x_1, ..., x_n)^T$, and by
(\ref{cucu}) this column is $(\lambda x_1, ..., \lambda x_n)^T$. On the
other hand, $circ (c_1, c_2\mu _2, ..., c_n\mu _n)circ (x_1, x_n, x_{n-1}, ..., x_3, x_2)$
is a circulant matrix (being the product of two circulants) and since we know
that its first column is $(\lambda x_1, ..., \lambda x_n)^T$ we find out that
$circ (c_1, c_2\mu _2, ..., c_n\mu _n)circ (x_1, x_n, x_{n-1}, ..., x_3, x_2)=
circ (\lambda x_1, \lambda x_n, \lambda x_{n-1}, ..., \lambda x_3, \lambda x_2)$.
To this equality we apply the algebra map $\Psi ^{-1}$; we obtain:
\begin{eqnarray*}
&&circ (c_1, c_2, ..., c_n; \mu _2, ..., \mu _n)
circ (x_1, \frac{x_n}{\mu _2}, \frac{x_{n-1}}{\mu _3}, ...,
\frac{x_3}{\mu _{n-1}}, \frac{x_2}{\mu _n}; \mu _2, ..., \mu _n)\\
&&\;\;\;\;\;\;\;\;\;=
circ (\lambda x_1, \frac{\lambda x_n}{\mu _2}, \frac{\lambda x_{n-1}}{\mu _3}, ...,
\frac{\lambda x_3}{\mu _{n-1}}, \frac{\lambda x_2}{\mu _n}; \mu _2, ..., \mu _n).
\end{eqnarray*}
The first column of the matrix $circ (x_1, \frac{x_n}{\mu _2}, \frac{x_{n-1}}{\mu _3}, ...,
\frac{x_3}{\mu _{n-1}}, \frac{x_2}{\mu _n}; \mu _2, ..., \mu _n)$ is
\begin{eqnarray*}
&&(x_1, \frac{x_2}{\mu _n}F(e_2, e_n), \frac{x_3}{\mu _{n-1}}F(e_3, e_{n-1}), ...,
\frac{x_{n-1}}{\mu _3}F(e_{n-1}, e_3), \frac{x_n}{\mu _2}F(e_n, e_2))^T,
\end{eqnarray*}
that is
\begin{eqnarray*}
&&(x_1, \frac{x_2}{\mu _n}\mu _2\mu _n, \frac{x_3}{\mu _{n-1}}\mu _3\mu _{n-1}, ...,
\frac{x_{n-1}}{\mu _3}\mu _{n-1}\mu _3, \frac{x_n}{\mu _2}\mu _n\mu _2)^T,
\end{eqnarray*}
which is $(x_1, x_2\mu _2, ..., x_{n-1}\mu _{n-1}, x_n\mu _n)^T$.
Similarly, we can see that the first column of the matrix
$circ (\lambda x_1, \frac{\lambda x_n}{\mu _2}, \frac{\lambda x_{n-1}}{\mu _3}, ...,
\frac{\lambda x_3}{\mu _{n-1}}, \frac{\lambda x_2}{\mu _n}; \mu _2, ..., \mu _n)$ is
$(\lambda x_1, \lambda x_2\mu _2, ..., \lambda x_n\mu _n)^T$.
So, the above matrix equality implies
$circ (c_1, ..., c_n; \mu _2, ..., \mu _n)(x_1, x_2\mu _2, ..., x_{n-1}\mu _{n-1}, x_n\mu _n)^T
=\lambda (x_1, x_2\mu _2, ..., x_n\mu _n)^T$,
i.e. $\lambda $ is an eigenvalue for $circ (c_1, c_2, ..., c_n; \mu _2,..., \mu _n)$
with eigenvector
$(x_1, x_2\mu _2, ..., x_n\mu _n)^T$.
\end{proof}

Assume now that ${\mathbb K}={\mathbb C}$.
Since we know (\cite{davis}) the eigenvalues and eigenvectors for any circulant
matrix, in particular for $circ (c_1, c_2\mu _2, ..., c_n\mu _n)$, we
obtain immediately from this Proposition the eigenvalues and eigenvectors for
$circ (c_1, c_2, ..., c_n; \mu _2,..., \mu _n)$:
\begin{proposition} \label{eigen}
For the complex matrix $C=circ (c_1, c_2, ..., c_n; \mu _2,..., \mu _n)$ define the
polynomial $p_C(X)=c_1+c_2\mu _2X+c_3\mu _3X^2+...+c_n\mu _nX^{n-1}$. For
$j\in \{1, 2,..., n\}$ define $\lambda _j=p_C(\omega ^{j-1})$, where
$\omega =cos (\frac{2\pi }{n})+i sin (\frac{2\pi }{n})$. Then $\lambda _1, ..., \lambda _n$ are the
eigenvalues of $C$, and the eigenvector of $\lambda _j$ is
$x_j=(1, \mu _2\omega ^{j-1}, \mu _3\omega ^{2(j-1)}, ..., \mu _n\omega ^{(n-1)(j-1)})^T$,
for all $1\leq j\leq n$.
\end{proposition}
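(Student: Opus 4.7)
The plan is to derive Proposition \ref{eigen} as an immediate consequence of the preceding proposition combined with the classical formula for eigenvalues and eigenvectors of ordinary circulants recalled in the Preliminaries (citing \cite{davis}).

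First, I would set $d_i := c_i \mu_i$ for $1 \le i \le n$ (recall the convention $\mu_1 = 1$), so that the auxiliary ordinary circulant matrix appearing in the hypothesis of the preceding proposition, namely $circ(c_1, c_2\mu_2, \ldots, c_n\mu_n)$, is simply $circ(d_1, d_2, \ldots, d_n)$. Its associated polynomial in the sense of the Preliminaries is
$$d_1 + d_2 X + d_3 X^2 + \cdots + d_n X^{n-1} = c_1 + c_2\mu_2 X + c_3\mu_3 X^2 + \cdots + c_n\mu_n X^{n-1},$$
which coincides exactly with the polynomial $p_C(X)$ defined in the statement.

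Next, I invoke the formula recalled in the Preliminaries: for each $j \in \{1, \ldots, n\}$, the scalar $\lambda_j = p_C(\omega^{j-1})$ is an eigenvalue of $circ(d_1, \ldots, d_n)$, with corresponding eigenvector $(1, \omega^{j-1}, \omega^{2(j-1)}, \ldots, \omega^{(n-1)(j-1)})^T$. Feeding this $\lambda_j$ and this eigenvector into the preceding proposition yields that the same scalar $\lambda_j$ is an eigenvalue of $C = circ(c_1, c_2, \ldots, c_n; \mu_2, \ldots, \mu_n)$, and that its eigenvector is obtained by multiplying the $i$-th entry by $\mu_i$, which gives precisely
$$x_j = (1, \mu_2 \omega^{j-1}, \mu_3 \omega^{2(j-1)}, \ldots, \mu_n \omega^{(n-1)(j-1)})^T.$$

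There is no real obstacle here since the substantive work has been done in the previous proposition; the only step that requires minor care is bookkeeping of indices when identifying $d_i$ with $c_i\mu_i$ and when converting the ordinary circulant eigenvector to the generalized one. To confirm one obtains the full spectrum, I would note that the $n$ vectors $x_j$ are linearly independent: they are the images of the Vandermonde-type basis of the ordinary circulant under the invertible diagonal transformation $diag(1, \mu_2, \ldots, \mu_n)$, and thus form an eigenbasis of $C$, as required.
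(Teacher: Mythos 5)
Your proposal is correct and follows essentially the same route as the paper, which derives this proposition immediately from the preceding transfer-of-eigenvectors proposition together with the classical formula from \cite{davis} for the ordinary circulant $circ(c_1, c_2\mu_2, \ldots, c_n\mu_n)$. Your added observation that the $n$ vectors $x_j$ are linearly independent (being the image of the Vandermonde eigenbasis under the invertible diagonal matrix $diag(1,\mu_2,\ldots,\mu_n)$) is a small but welcome extra justification that these account for the full spectrum.
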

\begin{remark}
Exactly as for ordinary circulant matrices, it follows that
every element $x=c_1e_1+...+c_ne_n\in {\mathbb K}_F{\mathbb Z}_n$ is a
solution to a certain polynomial $X^n-q_1(x)X^{n-1}+...+(-1)^nq_n(x)$, with
$q_1(x)=tr[circ (c_1, ..., c_n; \mu _2, ..., \mu _n)]=nc_1$
and $q_n(x)=det [circ (c_1, ..., c_n; \mu _2, ..., \mu _n)]$. Moreover, if
${\mathbb K}={\mathbb C}$, then for any $1\leq i\leq n$ we have that
$q_i(x)=s_i(\lambda _1,..., \lambda _n)$, where $s_i$ is the $i ^{th}$ elementary
symmetric polynomial and $\lambda _1, ..., \lambda _n$ are the eigenvalues of
$circ (c_1, ..., c_n; \mu _2, ..., \mu _n)$.
\end{remark}

We show now that, over the field ${\mathbb C}$, the matrices of the type
$circ (c_1, c_2, ..., c_n; \mu _2,..., \mu _n)$
generalize not only circulant matrices but also skew circulant matrices. Recall from
\cite{davis} that a skew circulant matrix is a circulant followed by a change in sign to all
the elements below the main diagonal. Such a matrix is denoted by $scirc (c_1, ..., c_n)$.
For example, $scirc (a, b, c)$ is the matrix
\[ \left( \begin{array}{ccc}
a & b & c\\
-c & a & b\\
-b & -c & a
\end{array} \right)\]

For a given $n$, we denote by $\sigma =cos (\frac{\pi }{n})+isin (\frac{\pi }{n})$ and
$\omega =\sigma ^2=cos (\frac{2\pi }{n})+isin (\frac{2\pi }{n})$. With this notation,
a straightforward computation (using the fact that $\sigma ^n=-1$) shows:
\begin{proposition}
$scirc (c_1, ..., c_n)=circ (c_1, ..., c_n; \sigma, \sigma ^2, ..., \sigma ^{n-1})$.
\end{proposition}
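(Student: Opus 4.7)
The plan is to verify the matrix equality entrywise by computing the $(i,j)$ entry of $circ(c_1,\ldots,c_n;\sigma,\sigma^2,\ldots,\sigma^{n-1})$ from its definition and matching it against the $(i,j)$ entry of $scirc(c_1,\ldots,c_n)$. With the convention $\mu_1=1$ and $\mu_k=\sigma^{k-1}$ for $1\le k\le n$, the defining formula gives the $(i,j)$ entry as
\[
c_{j-i+1}\,\frac{\mu_i\,\mu_{j-i+1}}{\mu_j},
\]
where the index $j-i+1$ is reduced modulo $n$ into $\{1,\ldots,n\}$. The skew circulant is characterized by its first row being $(c_1,\ldots,c_n)$, entries $c_{j-i+1}$ on and above the main diagonal, and entries $-c_{j-i+1+n}$ strictly below the main diagonal; so I would split the verification into these two cases.

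In the first case $j\ge i$, the index $j-i+1$ lies in $\{1,\ldots,n\}$ with no reduction needed, and the scalar factor collapses:
\[
\frac{\mu_i\,\mu_{j-i+1}}{\mu_j}=\sigma^{(i-1)+(j-i)-(j-1)}=\sigma^0=1,
\]
so the entry is simply $c_{j-i+1}$, exactly matching the skew circulant on and above the diagonal. In the second case $j<i$, the index must be replaced by $j-i+1+n$, giving $\mu_{j-i+1+n}=\sigma^{j-i+n}$, and the scalar factor becomes
\[
\frac{\mu_i\,\mu_{j-i+1+n}}{\mu_j}=\sigma^{(i-1)+(j-i+n)-(j-1)}=\sigma^n.
\]
The crux of the proof is the identity $\sigma^n=\cos\pi+i\sin\pi=-1$, which turns this factor into $-1$ and yields the required entry $-c_{j-i+1+n}$ below the diagonal.

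There is no real obstacle; the only thing to be careful about is bookkeeping the mod-$n$ convention on the subscript $j-i+1$ so that the exponents of $\sigma$ are computed correctly in both cases. Once these two short computations are combined, the two matrices agree in every entry, which is exactly the claimed equality.
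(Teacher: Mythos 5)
Your proof is correct and is exactly the ``straightforward computation (using the fact that $\sigma ^n=-1$)'' that the paper invokes without writing out: the paper gives no further detail, and your two-case entrywise verification, with the telescoping exponent $\sigma^{(i-1)+(j-i)-(j-1)}=1$ above the diagonal and the extra factor $\sigma^n=-1$ below it, is the intended argument. No gaps.
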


Consequently, the skew circulant matrices are a subalgebra of $M_n({\mathbb C})$ (this was noticed
also in \cite{davis}), which will be denoted by $sC^n_{{\mathbb C}}$. By what we have
done before it follows that we have an algebra isomorphism $\Psi :sC^n_{{\mathbb C}}
\simeq C^n_{{\mathbb C}}$, $\Psi (scirc (c_1, ..., c_n))=
circ (c_1, \sigma c_2, \sigma ^2c_3, ..., \sigma ^{n-1}c_n)$. Moreover,
the eigenvalues of a skew circulant matrix, computed in \cite{davis},
may be reobtained by applying Proposition \ref{eigen}: namely, the
eigenvalues of $scirc (c_1, ..., c_n)=circ (c_1, ..., c_n; \sigma, \sigma ^2, ..., \sigma ^{n-1})$
are given by $\lambda _j=p_C(\omega ^{j-1})$, for $1\leq j\leq n$, where
$p_C(X)=c_1+c_2\sigma X+c_3\sigma ^2X^2+...+c_n\sigma ^{n-1}X^{n-1}$.

\end{document}